\newtheorem{theorem}{Theorem}
\theoremstyle{plain}
\newtheorem{corollary}{Corollary}
\newtheorem{definition}{Definition}
\newtheorem{lemma}{Lemma}
\newtheorem{remark}{Remark}
\numberwithin{equation}{section}
\begin{document}
\title[New general integral inequalities]{New general integral inequalities
for $(\alpha ,m)$--GA-convex functions via Hadamard fractional integrals}
\author{\.{I}mdat \.{I}\c{s}can}
\address{Department of Mathematics, Faculty of Sciences and Arts, Giresun
University, Giresun, Turkey}
\email{imdat.iscan@giresun.edu.tr}
\author{Mehmet Kunt}
\address{Department of Mathematics, Faculty of Sciences, Karadeniz Technical
University, Trabzon, Turkey}
\subjclass[2000]{ 26A51, 26A33, 26D15. }
\keywords{Hermite--Hadamard type inequality, Ostrowski type inequality,
Simpson type inequality, GA-$(\alpha ,m)$-convex function.}

\begin{abstract}
In this paper, the authors gives a new identity for Hadamard fractional
integrals. By using of this identity, the authors obtains new estimates on
generalization of Hadamard, Ostrowski and Simpson type inequalities for $%
(\alpha ,m)$-GA-convex function via Hadamard fractional integral.
\end{abstract}

\maketitle

\section{Introduction}

Let a real function $f$ be defined on some nonempty interval $I$ of real
line $%
\mathbb{R}
$. The function $f$ is said to be convex on $I$ if inequality%
\begin{equation*}
f(tx+(1-t)y)\leq tf(x)+(1-t)f(y)
\end{equation*}%
holds for all $x,y\in I$ and $t\in \left[ 0,1\right] .$

Following inequalities are well known in the literature as Hermite-Hadamard
inequality, Ostrowski inequality and Simpson inequality respectively:

\begin{theorem}
Let $f:I\subseteq \mathbb{R\rightarrow R}$ be a convex function defined on
the interval $I$ of real numbers and $a,b\in I$ with $a<b$. The following
double inequality holds:%
\begin{equation*}
f\left( \frac{a+b}{2}\right) \leq \frac{1}{b-a}\dint\limits_{a}^{b}f(x)dx%
\leq \frac{f(a)+f(b)}{2}\text{.}
\end{equation*}
\end{theorem}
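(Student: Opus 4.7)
The plan is to prove both inequalities directly from the definition of convexity by using the linear change of variable $x = ta + (1-t)b$, which maps $t \in [0,1]$ bijectively onto $[a,b]$ and carries Lebesgue measure via $dx = -(b-a)\,dt$. This reduces both bounds to integrals of convex combinations on the unit interval, where convexity applies pointwise.

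For the upper bound, I would start from the convexity inequality $f(ta+(1-t)b) \le tf(a)+(1-t)f(b)$ valid for every $t \in [0,1]$, then integrate over $t$ on $[0,1]$. The left side becomes $\frac{1}{b-a}\int_a^b f(x)\,dx$ after the substitution, while the right side integrates to $\frac{f(a)+f(b)}{2}$ since $\int_0^1 t\,dt = \int_0^1 (1-t)\,dt = 1/2$. This yields the right-hand inequality immediately.

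For the lower bound, the key observation is that the two points $ta+(1-t)b$ and $(1-t)a+tb$ have midpoint exactly $(a+b)/2$. So applying convexity at $t=1/2$ to this pair gives
\begin{equation*}
f\!\left(\frac{a+b}{2}\right) \le \frac{1}{2}\bigl[f(ta+(1-t)b) + f((1-t)a+tb)\bigr].
\end{equation*}
Integrating over $t \in [0,1]$ and noting that the substitutions $x=ta+(1-t)b$ and $x=(1-t)a+tb$ both produce $\frac{1}{b-a}\int_a^b f(x)\,dx$ (one from $t\mapsto 1-t$ symmetry), the right-hand side collapses to a single mean integral, yielding the left inequality.

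There is really no serious obstacle here; the argument is entirely routine. The only mild point to be careful about is ensuring the substitution is performed with the correct Jacobian and orientation so that the sign works out, and verifying that the symmetry $t \leftrightarrow 1-t$ collapses the two integrals into one. No measurability issue arises because convex functions on an open interval are automatically continuous on the interior, hence Riemann integrable on $[a,b]$ (endpoint values are finite by hypothesis, so the integral is well defined as an improper integral if needed).
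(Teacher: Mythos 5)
Your proof is correct, but there is nothing in the paper to compare it against: this statement is the classical Hermite--Hadamard inequality, which the paper quotes in its introduction as known background and does not prove. Your argument --- integrating the convexity inequality $f(ta+(1-t)b)\leq tf(a)+(1-t)f(b)$ over $t\in[0,1]$ for the right-hand bound, and pairing the points $ta+(1-t)b$ and $(1-t)a+tb$, whose midpoint is $\frac{a+b}{2}$, for the left-hand bound --- is the standard textbook proof and is complete. The substitution bookkeeping and the closing remark on integrability (a convex function on $[a,b]$ is continuous on the interior and bounded, hence integrable) correctly dispose of the only technical points, so there is no gap.
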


\begin{theorem}
Let $f:I\subseteq \mathbb{R\rightarrow R}$ be a mapping differentiable in $%
I^{\circ },$ the interior of I, and let $a,b\in I^{\circ }$ with $a<b.$ If $%
\left\vert f^{\prime }(x)\right\vert \leq M,$ $x\in \left[ a,b\right] ,$
then we the following inequality holds:%
\begin{equation*}
\left\vert f(x)-\frac{1}{b-a}\dint\limits_{a}^{b}f(t)dt\right\vert \leq 
\frac{M}{b-a}\left[ \frac{\left( x-a\right) ^{2}+\left( b-x\right) ^{2}}{2}%
\right]
\end{equation*}%
for all $x\in \left[ a,b\right] .$ The constant $\frac{1}{4}$ is the best
possible in the sense that it cannot be replaced by a smaller one.
\end{theorem}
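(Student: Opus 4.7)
The plan is to derive the classical Ostrowski inequality from a Peano/Montgomery-type identity combined with the pointwise bound on $f'$. The first step would be to establish the identity
\begin{equation*}
f(x) - \frac{1}{b-a}\int_{a}^{b} f(t)\,dt = \frac{1}{b-a}\int_{a}^{b} p(x,t)\, f'(t)\,dt,
\end{equation*}
where the kernel is $p(x,t)=t-a$ for $t\in[a,x]$ and $p(x,t)=t-b$ for $t\in(x,b]$. This identity follows from integration by parts applied separately on the two subintervals: $\int_a^x (t-a) f'(t)\,dt = (x-a)f(x) - \int_a^x f(t)\,dt$ and $\int_x^b (t-b) f'(t)\,dt = (b-x)f(x) - \int_x^b f(t)\,dt$; adding these, the boundary contributions collapse to $(b-a)f(x)$ and the integrals combine into $\int_a^b f(t)\,dt$.

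Next, I would take absolute values in this identity, push them inside the integral via the triangle inequality, and apply the hypothesis $|f'(t)|\leq M$ to obtain
\begin{equation*}
\left| f(x) - \frac{1}{b-a}\int_a^b f(t)\,dt \right| \leq \frac{M}{b-a} \int_a^b |p(x,t)|\,dt.
\end{equation*}
A direct computation of the remaining integral gives
\begin{equation*}
\int_a^b |p(x,t)|\,dt = \int_a^x (t-a)\,dt + \int_x^b (b-t)\,dt = \frac{(x-a)^2 + (b-x)^2}{2},
\end{equation*}
and substituting this back yields exactly the claimed bound.

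Finally, the sharpness assertion about the constant $\tfrac{1}{4}$ refers to the equivalent form in which the right-hand side is written as $M(b-a)\bigl[\tfrac{1}{4} + (x-\tfrac{a+b}{2})^2/(b-a)^2\bigr]$, the value $\tfrac{1}{4}$ being attained at $x=\tfrac{a+b}{2}$. To see that $\tfrac{1}{4}$ cannot be lowered, I would exhibit an extremal: the function $f(t) = |t - \tfrac{a+b}{2}|$ (or a smooth mollification of it) has $|f'|\leq 1$ almost everywhere and produces equality at the midpoint, forcing the constant to be sharp. I do not anticipate any substantive obstacle; the only mildly delicate point is regularising the extremal to make the comparison fully rigorous under the $C^1$ hypothesis, which is a standard approximation argument.
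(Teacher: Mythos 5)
Your argument is correct, but there is nothing in the paper to compare it to: the statement is the classical Ostrowski inequality, quoted in the paper's introduction as known background (with no proof given — the paper only proves its new fractional-integral results later), so your proof must be judged on its own merits. On those merits it is sound and is the standard route: the Montgomery identity with kernel $p(x,t)=t-a$ on $[a,x]$, $p(x,t)=t-b$ on $(x,b]$ is verified correctly by integration by parts (legitimate here since $|f'|\leq M$ makes $f$ Lipschitz, hence absolutely continuous), and the computation $\int_a^b\left\vert p(x,t)\right\vert dt=\frac{(x-a)^{2}+(b-x)^{2}}{2}$ gives exactly the stated bound. You also handled sensibly a defect of the statement itself: the clause about the constant $\frac{1}{4}$ is orphaned in the paper's formulation and only makes sense after rewriting the right-hand side as $M(b-a)\left[ \frac{1}{4}+\frac{\left( x-\frac{a+b}{2}\right)^{2}}{(b-a)^{2}}\right]$, which you did. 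Your sharpness argument is essentially complete: the tent function $f(t)=\left\vert t-\frac{a+b}{2}\right\vert$ gives equality at $x=\frac{a+b}{2}$, and since convolution with a mollifier preserves the Lipschitz bound $|f_{\varepsilon }'|\leq 1$ while $f_{\varepsilon }\rightarrow f$ uniformly, the left-hand side at the midpoint tends to $\frac{b-a}{4}$, so any constant $c<\frac{1}{4}$ fails for $f_{\varepsilon }$ with $\varepsilon$ small — the "standard approximation argument" you defer to really is routine, and spelling out those two lines would close the only gap in the write-up.
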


\begin{theorem}
Let $f:\left[ a,b\right] \mathbb{\rightarrow R}$ be a four times
continuously differentiable mapping on $\left( a,b\right) $ and $\left\Vert
f^{(4)}\right\Vert _{\infty }=\underset{x\in \left( a,b\right) }{\sup }%
\left\vert f^{(4)}(x)\right\vert <\infty .$ Then the following inequality
holds:%
\begin{equation*}
\left\vert \frac{1}{3}\left[ \frac{f(a)+f(b)}{2}+2f\left( \frac{a+b}{2}%
\right) \right] -\frac{1}{b-a}\dint\limits_{a}^{b}f(x)dx\right\vert \leq 
\frac{1}{2880}\left\Vert f^{(4)}\right\Vert _{\infty }\left( b-a\right) ^{4}.
\end{equation*}
\end{theorem}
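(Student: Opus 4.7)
The idea is to recognize the left-hand side as the error functional of the classical Simpson quadrature rule and to estimate it via the Peano kernel representation. Simpson's rule is exact on polynomials of degree at most three, so the error depends only on $f^{(4)}$; the constant $1/2880$ will drop out of the $L^{1}$-norm of the associated kernel once the interval is properly rescaled.

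First, I would perform the affine substitution $x=\frac{a+b}{2}+\frac{b-a}{2}t$, setting $g(t)=f\!\left(\frac{a+b}{2}+\frac{b-a}{2}t\right)$, to transfer the problem onto the standard interval $[-1,1]$. Since $\|g^{(4)}\|_{\infty}=\left(\frac{b-a}{2}\right)^{4}\|f^{(4)}\|_{\infty}$, the claim reduces to proving
\begin{equation*}
\left|\frac{1}{6}\bigl[g(-1)+4g(0)+g(1)\bigr]-\frac{1}{2}\int_{-1}^{1}g(t)\,dt\right|\leq\frac{1}{180}\|g^{(4)}\|_{\infty}.
\end{equation*}

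Next, I would work on the two half-intervals $[-1,0]$ and $[0,1]$ separately, integrating by parts four successive times with a piecewise-cubic kernel $K$ whose antiderivative constants are tuned so that the surviving boundary values at $-1,0,1$ reproduce exactly the Simpson weights $\tfrac{1}{6},\tfrac{4}{6},\tfrac{1}{6}$ while all boundary contributions from $g',g'',g'''$ cancel. This produces the identity
\begin{equation*}
\frac{1}{6}\bigl[g(-1)+4g(0)+g(1)\bigr]-\frac{1}{2}\int_{-1}^{1}g(t)\,dt=\int_{-1}^{1}K(t)\,g^{(4)}(t)\,dt,
\end{equation*}
whence the triangle inequality furnishes $\|g^{(4)}\|_{\infty}\int_{-1}^{1}|K(t)|\,dt$ as an upper bound. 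Evaluating $\int_{-1}^{1}|K(t)|\,dt=1/180$ and then restoring the factor $(b-a)^{4}/16$ yields the advertised constant $1/2880$.

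The main obstacle is the bookkeeping required to construct $K$ and compute its $L^{1}$-norm: fixing the antiderivative constants so that precisely the right boundary terms survive is delicate, and one must verify that $K$ has constant sign on $[-1,1]$. Once that sign property is established, $\int|K|=\bigl|\int K\bigr|$ and the value $1/180$ can be extracted in one stroke by substituting $g(t)=t^{4}/24$ (so that $g^{(4)}\equiv 1$) into the identity. A cleaner but computationally heavier alternative is to Taylor-expand $f$ to order three around the midpoint $(a+b)/2$, insert the integral form of the remainder into both sides, and estimate term by term; this reaches the same bound, but the constant then emerges as a sum of separate contributions rather than as the norm of a single kernel.
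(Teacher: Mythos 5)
The paper does not actually prove this statement: it is quoted in the introduction (as Theorem 3) as a classical, well-known inequality of Simpson type, stated without proof as background for the fractional-integral generalizations that follow, so there is no in-paper argument to compare yours against. On its own terms, your plan is the standard and correct Peano-kernel proof, and the constants all check out: after your affine rescaling one has $\left\Vert g^{(4)}\right\Vert _{\infty }=\left( \frac{b-a}{2}\right) ^{4}\left\Vert f^{(4)}\right\Vert _{\infty }$, and since the Simpson functional annihilates cubics, the Peano representation on $\left[ -1,1\right] $ holds with the kernel $K(t)=\frac{(1-t)^{3}(1+3t)}{144}$ for $t\in \left[ 0,1\right] $, extended evenly; this $K$ is indeed nonnegative, it is $C^{2}$ across $0$ with a jump of $2/3$ in $K^{\prime \prime \prime }$ there (reproducing the weight $4/6$ of $g(0)$) and boundary values $K^{\prime \prime \prime }(\pm 1)=\mp 1/6$ (reproducing the weights $1/6$), and $\int_{-1}^{1}K(t)\,dt=2\int_{0}^{1}\frac{(1-t)^{3}(1+3t)}{144}dt=\frac{1}{180}$, which your shortcut $g(t)=t^{4}/24$ confirms since $\frac{1}{6}\cdot \frac{2}{24}-\frac{1}{2}\int_{-1}^{1}\frac{t^{4}}{24}dt=\frac{1}{72}-\frac{1}{120}=\frac{1}{180}$; finally $\frac{1}{180}\cdot \frac{(b-a)^{4}}{16}=\frac{(b-a)^{4}}{2880}$ as required. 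The only caveat is that what you submitted is a plan rather than a finished proof: the one nontrivial verification you defer, the constant sign of $K$, is immediate from the displayed factorization, so writing out the kernel explicitly (rather than tuning antiderivative constants abstractly) would close the argument in a few lines.
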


The following defnitions are well known in the literature.

\begin{definition}
\cite{10,11}. A function $f:I\subseteq \left( 0,\infty \right) \rightarrow 
\mathbb{R}
$ is said to be GA-convex (geometric-arithmatically convex) if%
\begin{equation*}
f(x^{t}y^{1-t})\leq tf(x)+\left( 1-t\right) f(y)
\end{equation*}%
for all $x,y\in I$ and $t\in \left[ 0,1\right] $.

\begin{definition}
\cite{8}. Let $f:(0,b]\rightarrow 
\mathbb{R}
,b>0,$ and $\left( \alpha ,m\right) \in \left( 0,1\right] ^{2}$. If 
\begin{equation*}
f(x^{t}y^{m\left( 1-t\right) })\leq t^{\alpha }f(x)+m\left( 1-t^{\alpha
}\right) f(y)
\end{equation*}%
for all $x,y\in (0,b]$ and $t\in \lbrack 0,1]$, then $f$ is said to be a $%
\left( \alpha ,m\right) $-GA-convex function .
\end{definition}
\end{definition}

\bigskip Note that $\left( \alpha ,m\right) \in \left\{ \left( 1,m\right)
,\left( 1,1\right) ,\left( \alpha ,1\right) \right\} $ one obtains the
following classes of functions: $m$-GA-convex, GA-convex, $\alpha $%
-GA-convex (or GA-$s$-convex in the first sense, if we take $s$ instead of $%
\alpha $ (see \cite{19})).

We will now give definitions of the right-sided and left-sided Hadamard
fractional integrals which are used throughout this paper.

\begin{definition}
\cite{4}. Let $f\in L\left[ a,b\right] $. The right-sided and left-sided
Hadamard fractional integrals $J_{a^{+}}^{\theta }f$ and $J_{b^{-}}^{\theta
}f$ of oder $\theta >0$ with $b>a\geq 0$ are defined by

\begin{equation*}
J_{a+}^{\theta }f(x)=\frac{1}{\Gamma (\theta )}\dint\limits_{a}^{x}\left(
\ln \frac{x}{t}\right) ^{\theta -1}f(t)\frac{dt}{t},\ a<x<b
\end{equation*}

and

\begin{equation*}
J_{b-}^{\theta }f(x)=\frac{1}{\Gamma (\theta )}\dint\limits_{x}^{b}\left(
\ln \frac{t}{x}\right) ^{\theta -1}f(t)\frac{dt}{t},\ a<x<b
\end{equation*}%
respectively, where $\Gamma (\theta )$ is the Gamma function defined by $%
\Gamma (\theta )=$ $\dint\limits_{0}^{\infty }e^{-t}t^{\theta -1}dt$.
\end{definition}

In \cite{20}, \.{I}\c{s}can represented Hermite-Hadamard's inequalities for
GA-convex functions in fractional integral forms as follows:

\begin{theorem}
Let $f:I\subseteq \left( 0,\infty \right) \rightarrow 
\mathbb{R}
$ be a function such that $f\in L[a,b]$, where $a,b\in I$ with $a<b$. If $f$
is a GA-convex function on $[a,b]$, then the following inequalities for
fractional integrals hold:%
\begin{equation*}
f\left( \sqrt{ab}\right) \leq \frac{\Gamma (\theta +1)}{2\left( \ln \frac{b}{%
a}\right) ^{\theta }}\left\{ J_{a+}^{\theta }f(b)+J_{b-}^{\theta
}f(a)\right\} \leq \frac{f(a)+f(b)}{2}
\end{equation*}%
with $\alpha >0$.
\end{theorem}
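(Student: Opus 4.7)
The plan is to mimic the proof of the classical Hermite--Hadamard inequality but with the geometric parametrization $u = a^{t}b^{1-t}$ in place of the usual affine one, so that the logarithmic kernel of the Hadamard fractional integral appears naturally.

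For the left inequality, I would start from GA-convexity applied at the midpoint $t = 1/2$ with the pair of points $x = a^{t}b^{1-t}$, $y = a^{1-t}b^{t}$. Since $\sqrt{xy} = \sqrt{a^{t}b^{1-t}\cdot a^{1-t}b^{t}} = \sqrt{ab}$, GA-convexity gives
\begin{equation*}
f\!\left(\sqrt{ab}\right) \;\leq\; \tfrac{1}{2}\bigl[f(a^{t}b^{1-t}) + f(a^{1-t}b^{t})\bigr].
\end{equation*}
Then I would multiply by $t^{\theta-1}$ and integrate over $t\in[0,1]$. The left side becomes $f(\sqrt{ab})/\theta$. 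For the right side, I would apply the substitution $u = a^{t}b^{1-t}$ (respectively $u = a^{1-t}b^{t}$) in each of the two integrals. Differentiating $\ln u = t\ln a + (1-t)\ln b$ gives $du/u = -\ln(b/a)\,dt$, and $t$ pulls back to $\ln(b/u)/\ln(b/a)$ in the first case and $\ln(u/a)/\ln(b/a)$ in the second. After changing limits one obtains
\begin{equation*}
\int_{0}^{1} t^{\theta-1}f(a^{t}b^{1-t})\,dt \;=\; \frac{\Gamma(\theta)}{(\ln(b/a))^{\theta}}\,J_{a+}^{\theta}f(b),
\end{equation*}
and the analogous identity with $J_{b-}^{\theta}f(a)$ for the other integral. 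Combining these and using $\theta\,\Gamma(\theta) = \Gamma(\theta+1)$ yields the first half of the claimed inequality.

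For the right inequality, I would apply GA-convexity directly in the form
\begin{equation*}
f(a^{t}b^{1-t}) \leq t f(a)+(1-t)f(b), \qquad f(a^{1-t}b^{t}) \leq (1-t)f(a)+t f(b),
\end{equation*}
add them to get $f(a^{t}b^{1-t}) + f(a^{1-t}b^{t}) \leq f(a)+f(b)$, then multiply by $t^{\theta-1}$ and integrate on $[0,1]$. Using the same two substitutions as above on the left, and $\int_{0}^{1} t^{\theta-1}dt = 1/\theta$ on the right, I recover the upper bound after multiplying by $\Gamma(\theta+1)/2$.

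The only real obstacle is keeping the bookkeeping in the change of variables straight: the orientation flip when $u = a^{t}b^{1-t}$ runs from $b$ down to $a$ as $t$ goes from $0$ to $1$ needs to be absorbed by the sign of $du/u$, and it must be checked that the power of $\ln(b/a)$ coming out of the substitution combines cleanly with the factor of $\theta$ from integrating $t^{\theta-1}$ to produce exactly $\Gamma(\theta+1)/(\ln(b/a))^{\theta}$. Everything else is just the standard Hermite--Hadamard argument transported to the multiplicative setting.
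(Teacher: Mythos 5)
Your proof is correct: the midpoint application of GA-convexity to the pair $a^{t}b^{1-t}$, $a^{1-t}b^{t}$ (whose geometric mean is $\sqrt{ab}$), followed by multiplying by $t^{\theta-1}$ and the substitutions $u=a^{t}b^{1-t}$ and $u=a^{1-t}b^{t}$, does produce exactly $\frac{\Gamma(\theta)}{(\ln (b/a))^{\theta}}J_{a+}^{\theta}f(b)$ and $\frac{\Gamma(\theta)}{(\ln (b/a))^{\theta}}J_{b-}^{\theta}f(a)$, and the bookkeeping you flag (the orientation flip at $u=a^{t}b^{1-t}$ and the factor $\theta\,\Gamma(\theta)=\Gamma(\theta+1)$) checks out, as does the upper bound obtained by adding the two pointwise GA-convexity estimates. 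Note that this paper states the theorem without proof, citing \cite{20}; your argument is essentially the standard one used there (the Riemann--Liouville midpoint argument transported to the multiplicative setting), so it matches the source proof in approach.
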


In \cite{20}, \.{I}\c{s}can gave the following identity for differentiable
functions..

\begin{lemma}
\label{Lemma1}Let $f:I\subseteq \left( 0,\infty \right) \rightarrow 
\mathbb{R}
$ be a differentiable function on $I^{\circ }$ such that $f^{\prime }\in
L[a,b]$, where $a,b\in I$ with $a<b$. Then for all $x\in \lbrack a,b]$ , $%
\lambda \in \left[ 0,1\right] $ and $\alpha >0$ we have:%
\begin{eqnarray*}
I_{f}\left( x,\lambda ,\theta ,a,b\right) &=&\left( 1-\lambda \right) \left[
\ln ^{\theta }\frac{x}{a}+\ln ^{\theta }\frac{b}{x}\right] f\left( x\right)
+\lambda \left[ f\left( a\right) \ln ^{\theta }\frac{x}{a}+f\left( b\right)
\ln ^{\theta }\frac{b}{x}\right] \\
&&-\Gamma (\theta +1)\left[ J_{x-}^{\theta }f(a)+J_{x+}^{\theta }f(b)\right]
\\
&=&a\left( \ln \frac{x}{a}\right) ^{\theta +1}\dint\limits_{0}^{1}\left(
t^{\theta }-\lambda \right) \left( \frac{x}{a}\right) ^{t}f^{\prime }\left(
x^{t}a^{1-t}\right) dt \\
&&-b\left( \ln \frac{b}{x}\right) ^{\theta +1}\dint\limits_{0}^{1}\left(
t^{\theta }-\lambda \right) \left( \frac{x}{b}\right) ^{t}f^{\prime }\left(
x^{t}b^{1-t}\right) dt.
\end{eqnarray*}
\end{lemma}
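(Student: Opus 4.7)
The plan is to start from the right-hand side of the identity and reduce each of the two integrals to its claimed contribution to the left-hand side via the chain rule, one integration by parts, and a Hadamard-type substitution.

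First I would observe that, for the first integral, the chain rule gives
\begin{equation*}
\frac{d}{dt}f\!\left(x^{t}a^{1-t}\right)=x^{t}a^{1-t}\ln\!\frac{x}{a}\cdot f^{\prime}\!\left(x^{t}a^{1-t}\right),
\end{equation*}
so that $(x/a)^{t}f^{\prime}(x^{t}a^{1-t})=\frac{1}{a\ln(x/a)}\frac{d}{dt}f(x^{t}a^{1-t})$. This reduces the first integral on the RHS to $(\ln(x/a))^{\theta}\int_{0}^{1}(t^{\theta}-\lambda)\,\frac{d}{dt}f(x^{t}a^{1-t})\,dt$, killing one factor of $\ln(x/a)$ and the factor $a$ in front.

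Next I would integrate by parts, differentiating $(t^{\theta}-\lambda)$. The boundary $[(t^{\theta}-\lambda)f(x^{t}a^{1-t})]_{0}^{1}$ evaluates to $(1-\lambda)f(x)+\lambda f(a)$, which, after multiplying by $\ln^{\theta}(x/a)$, produces exactly the first of the two $(1-\lambda)$-terms and the first of the two $\lambda$-terms on the LHS. The remaining integral $\int_{0}^{1}\theta t^{\theta-1}f(x^{t}a^{1-t})\,dt$ is then converted via the substitution $u=x^{t}a^{1-t}$ (so that $t=\ln(u/a)/\ln(x/a)$ and $dt=du/(u\ln(x/a))$), which after a short calculation gives $\Gamma(\theta+1)(\ln(x/a))^{-\theta}J_{x-}^{\theta}f(a)$. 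Multiplying back by $\ln^{\theta}(x/a)$ produces the first Hadamard integral term on the LHS.

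The second integral is handled analogously, but here the delicate point — and the place I expect the main obstacle — is the sign bookkeeping. Since $\ln(x/b)=-\ln(b/x)<0$, the chain rule gives $(x/b)^{t}f^{\prime}(x^{t}b^{1-t})=-\frac{1}{b\ln(b/x)}\frac{d}{dt}f(x^{t}b^{1-t})$, and when combined with the leading minus sign in the stated RHS and the $(\ln(b/x))^{\theta+1}$ factor, these signs conspire to give the \emph{positive} coefficient $(\ln(b/x))^{\theta}$ in front of the integrated form. The substitution $u=x^{t}b^{1-t}$ reverses the orientation of the interval (from $b$ down to $x$), and one must reverse the limits to recognize
\begin{equation*}
\int_{0}^{1}\theta t^{\theta-1}f(x^{t}b^{1-t})\,dt=\frac{\Gamma(\theta+1)}{(\ln(b/x))^{\theta}}J_{x+}^{\theta}f(b).
\end{equation*}
Once all signs are tracked correctly, integration by parts and this substitution yield the remaining $(1-\lambda)f(x)\ln^{\theta}(b/x)$ term, the $\lambda f(b)\ln^{\theta}(b/x)$ term, and $-\Gamma(\theta+1)J_{x+}^{\theta}f(b)$. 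Adding the two contributions gives the stated $I_{f}(x,\lambda,\theta,a,b)$, completing the proof.
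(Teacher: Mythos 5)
Your proof is correct: the chain-rule identity, the integration by parts with boundary values $(1-\lambda)f(x)+\lambda f(a)$ (resp. $+\lambda f(b)$), the substitutions $u=x^{t}a^{1-t}$ and $u=x^{t}b^{1-t}$ recognizing $\Gamma(\theta+1)\ln^{-\theta}(x/a)\,J_{x-}^{\theta}f(a)$ and $\Gamma(\theta+1)\ln^{-\theta}(b/x)\,J_{x+}^{\theta}f(b)$, and the sign bookkeeping from $\ln(x/b)=-\ln(b/x)$ all check out, and summing the two contributions yields exactly $I_{f}(x,\lambda,\theta,a,b)$. The paper itself gives no proof of this lemma (it is quoted from reference \cite{20}), and the argument there is essentially this same one, so your proposal matches the intended proof.
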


In recent years, many athors have studied errors estimations for
Hermite-Hadamard, Ostrowski and Simpson inequalities; for refinements,
counterparts, generalization see \cite{1,2,3,5,6,7,12,13,15,16,17,18}.

In this paper, new identity for fractional integrals have been defined. By
using of this identity, we obtained a generalization of Hadamard, Ostrowski
and Simpson type inequalities for $(\alpha ,m)$-GA-convex functions via
Hadamard fractional integrals.

\ 

\section{Main results}

Let $f:I\subseteq \left( 0,\infty \right) \rightarrow 
\mathbb{R}
$ be a differentiable function on $I^{\circ }$, the interior of $I$,
throughout this section we will take

\begin{eqnarray*}
K_{f}\left( \lambda ,\theta ,x^{m},a^{m},b^{m}\right) &=&\left( 1-\lambda
\right) m^{\theta }\left[ \ln ^{\theta }\frac{x}{a}+\ln ^{\theta }\frac{b}{x}%
\right] f(x^{m}) \\
&&+\lambda m^{\theta }\left[ f(a^{m})\ln ^{\theta }\frac{x}{a}+f(b^{m})\ln
^{\theta }\frac{b}{x}\right] \\
&&-\Gamma \left( \theta +1\right) \left[ J_{x^{m}-}^{\theta
}f(a^{m})+J_{x^{m}+}^{\theta }f(b^{m})\right]
\end{eqnarray*}%
where $a,b\in I$ with $a<b$, $\ x\in \lbrack a,b]$ , $\lambda \in \left[ 0,1%
\right] $, $\theta >0$ and $\Gamma $ is Euler Gamma function.

Similarly to Lemma \ref{Lemma1}, we can prove the following lemma.

\begin{lemma}
\label{Lemma2}Let $f:I\subseteq \left( 0,\infty \right) \rightarrow 
\mathbb{R}
$ be a differentiable function on $I^{\circ }$ such that $f^{\prime }\in
L[a^{m},b^{m}]$, where $a^{m},b\in I$ with $a<b$ and $m\in \left( 0,1\right] 
$. Then for all $x\in \lbrack a,b]$, $\lambda \in \left[ 0,1\right] $ and $%
\theta >0$ we have:%
\begin{eqnarray*}
&&K_{f}\left( \lambda ,\theta ,x^{m},a^{m},b^{m}\right) =m^{\theta
+1}a^{m}\left( \ln \frac{x}{a}\right) ^{\theta +1}\dint\limits_{0}^{1}\left(
t^{\theta }-\lambda \right) \left( \frac{x}{a}\right) ^{mt}f^{\prime }\left(
x^{mt}a^{m\left( 1-t\right) }\right) dt \\
&&-m^{\theta +1}b^{m}\left( \ln \frac{b}{x}\right) ^{\theta
+1}\dint\limits_{0}^{1}\left( t^{\theta }-\lambda \right) \left( \frac{x}{b}%
\right) ^{mt}f^{\prime }\left( x^{mt}b^{m\left( 1-t\right) }\right) dt.
\end{eqnarray*}
\end{lemma}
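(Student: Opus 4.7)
\textbf{Proof plan for Lemma \ref{Lemma2}.} The plan is to start with the right-hand side and reduce each of the two integrals to Hadamard fractional integrals by integration by parts followed by a change of variable. Denote the two integrals on the right by
\begin{equation*}
I_{1}=\dint\limits_{0}^{1}\left( t^{\theta}-\lambda\right) \left(\tfrac{x}{a}\right)^{mt} f^{\prime}\!\left(x^{mt}a^{m(1-t)}\right)dt,
\qquad
I_{2}=\dint\limits_{0}^{1}\left( t^{\theta}-\lambda\right) \left(\tfrac{x}{b}\right)^{mt} f^{\prime}\!\left(x^{mt}b^{m(1-t)}\right)dt,
\end{equation*}
so the right-hand side of the claim is $m^{\theta+1}a^{m}(\ln\tfrac{x}{a})^{\theta+1}I_{1}-m^{\theta+1}b^{m}(\ln\tfrac{b}{x})^{\theta+1}I_{2}$.

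First I would handle $I_{1}$. The key observation is the identity
\begin{equation*}
\frac{d}{dt}f\!\left(x^{mt}a^{m(1-t)}\right)=m\,a^{m}\ln\tfrac{x}{a}\,\left(\tfrac{x}{a}\right)^{mt}f^{\prime}\!\left(x^{mt}a^{m(1-t)}\right),
\end{equation*}
which turns $\left(\tfrac{x}{a}\right)^{mt}f^{\prime}(\cdots)\,dt$ into $\frac{1}{m a^{m}\ln(x/a)}\,d\bigl[f(x^{mt}a^{m(1-t)})\bigr]$. Integrating by parts with $U=t^{\theta}-\lambda$ produces a boundary term that contributes $(1-\lambda)f(x^{m})+\lambda f(a^{m})$ (after dividing by the constants) and a remaining integral $\theta\int_{0}^{1}t^{\theta-1}f(x^{mt}a^{m(1-t)})\,dt$. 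On this last integral I would perform the substitution $u=x^{mt}a^{m(1-t)}$, under which $\ln(u/a^{m})=mt\ln(x/a)$ and $du/u=m\ln(x/a)\,dt$, so it rewrites as
\begin{equation*}
\frac{1}{(m\ln(x/a))^{\theta}}\dint\limits_{a^{m}}^{x^{m}}\!\left(\ln\tfrac{u}{a^{m}}\right)^{\!\theta-1}\!f(u)\,\tfrac{du}{u}=\frac{\Gamma(\theta)}{(m\ln(x/a))^{\theta}}\,J_{x^{m}-}^{\theta}f(a^{m}).
\end{equation*}
Multiplying through by $m^{\theta+1}a^{m}(\ln\tfrac{x}{a})^{\theta+1}$ yields exactly $m^{\theta}(\ln\tfrac{x}{a})^{\theta}\bigl[(1-\lambda)f(x^{m})+\lambda f(a^{m})\bigr]-\Gamma(\theta+1)J_{x^{m}-}^{\theta}f(a^{m})$.

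The treatment of $I_{2}$ is parallel, but with a sign subtlety I would be careful about: since $x<b$, the map $t\mapsto u=x^{mt}b^{m(1-t)}$ is \emph{decreasing}, so $du/u=-m\ln(b/x)\,dt$ and the substitution limits flip. These two sign flips combine with the outer minus in front of $I_{2}$ to yield the correct right-sided Hadamard integral $J_{x^{m}+}^{\theta}f(b^{m})=\frac{1}{\Gamma(\theta)}\int_{x^{m}}^{b^{m}}(\ln\tfrac{b^{m}}{u})^{\theta-1}f(u)\tfrac{du}{u}$. After the analogous integration by parts and substitution, the $-m^{\theta+1}b^{m}(\ln\tfrac{b}{x})^{\theta+1}I_{2}$ piece becomes $m^{\theta}(\ln\tfrac{b}{x})^{\theta}\bigl[(1-\lambda)f(x^{m})+\lambda f(b^{m})\bigr]-\Gamma(\theta+1)J_{x^{m}+}^{\theta}f(b^{m})$.

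Summing the two pieces collects the $(1-\lambda)f(x^{m})$ and $\lambda f(a^{m}),\lambda f(b^{m})$ terms in exactly the pattern defining $K_{f}(\lambda,\theta,x^{m},a^{m},b^{m})$, while the Hadamard terms combine into $-\Gamma(\theta+1)[J_{x^{m}-}^{\theta}f(a^{m})+J_{x^{m}+}^{\theta}f(b^{m})]$, completing the identity. The only nontrivial obstacle is careful bookkeeping of the powers of $m$, $\ln(x/a)$ and $\ln(b/x)$, and in particular handling the orientation reversal in the substitution for $I_{2}$; everything else follows the same template as Lemma \ref{Lemma1}.
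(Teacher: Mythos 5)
Your proof is correct and takes essentially the same route as the paper, which establishes Lemma \ref{Lemma2} precisely ``similarly to Lemma \ref{Lemma1}'': integration by parts with $U=t^{\theta}-\lambda$ using $\frac{d}{dt}f\left(x^{mt}a^{m(1-t)}\right)=ma^{m}\ln \frac{x}{a}\left(\frac{x}{a}\right)^{mt}f^{\prime}\left(x^{mt}a^{m(1-t)}\right)$, followed by the logarithmic substitution to Hadamard integrals, and your boundary terms, sign bookkeeping for $I_{2}$, and final assembly of $K_{f}$ all check out against the definitions. (It is worth noting the identity is also immediate from Lemma \ref{Lemma1} by substituting $a^{m},x^{m},b^{m}$ for $a,x,b$, since $\ln^{\theta}\frac{x^{m}}{a^{m}}=m^{\theta}\ln^{\theta}\frac{x}{a}$.)
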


\begin{theorem}
\label{Theorem5}Let $f:I\subseteq \left( 0,\infty \right) \rightarrow 
\mathbb{R}
$ be a differentiable function on $I^{\circ }$ such that $f^{\prime }\in
L[a^{m},b^{m}]$, where $a^{m},b\in I$ $^{\circ }$ with $a<b$ and $m\in
\left( 0,1\right] $. If $|f^{\prime }|^{q}$ is $\left( \alpha ,m\right) $%
-GA-convex on $[a^{m},b]$ for some fixed $q\geq 1$, $x\in \lbrack a,b]$, $%
\lambda \in \left[ 0,1\right] $ and $\theta >0$ then the following
inequality for fractional integrals holds%
\begin{eqnarray}
&&\left\vert K_{f}\left( \lambda ,\theta ,x^{m},a^{m},b^{m}\right)
\right\vert \leq m^{\theta +1}C_{o}\left( \theta ,\lambda \right) ^{1-\frac{1%
}{q}}  \notag \\
&&\times \left\{ a^{m}\left( \ln \frac{x}{a}\right) ^{\theta +1}\left( 
\begin{array}{c}
\left\vert f^{\prime }\left( x^{m}\right) \right\vert ^{q}C_{1}\left(
x,\theta ,\lambda ,q,m,\alpha \right) \\ 
+m\left\vert f^{\prime }\left( a\right) \right\vert ^{q}C_{2}\left( x,\theta
,\lambda ,q,m,\alpha \right)%
\end{array}%
\right) ^{\frac{1}{q}}\right.  \notag \\
&&\left. +b^{m}\left( \ln \frac{b}{x}\right) ^{\theta +1}\left( 
\begin{array}{c}
\left\vert f^{\prime }\left( x^{m}\right) \right\vert ^{q}C_{3}\left(
x,\theta ,\lambda ,q,m,\alpha \right) \\ 
+m\left\vert f^{\prime }\left( b\right) \right\vert ^{q}C_{4}\left( x,\theta
,\lambda ,q,m,\alpha \right)%
\end{array}%
\right) ^{\frac{1}{q}}\right\}  \label{2.1}
\end{eqnarray}%
where 
\begin{eqnarray*}
C_{o}\left( \theta ,\lambda \right) &=&\frac{2\theta \lambda ^{1+\frac{1}{%
\theta }}+1}{\theta +1}-\lambda , \\
C_{1}\left( x,\theta ,\lambda ,q,m,\alpha \right)
&=&\dint\limits_{0}^{1}\left\vert t^{\theta }-\lambda \right\vert \left( 
\frac{x}{a}\right) ^{qmt}t^{\alpha }dt, \\
C_{2}\left( x,\theta ,\lambda ,q,m,\alpha \right)
&=&\dint\limits_{0}^{1}\left\vert t^{\theta }-\lambda \right\vert \left( 
\frac{x}{a}\right) ^{qmt}\left( 1-t^{\alpha }\right) dt, \\
C_{3}\left( x,\theta ,\lambda ,q,m,\alpha \right)
&=&\dint\limits_{0}^{1}\left\vert t^{\theta }-\lambda \right\vert \left( 
\frac{x}{b}\right) ^{qmt}t^{\alpha }dt, \\
C_{4}\left( x,\theta ,\lambda ,q,m,\alpha \right)
&=&\dint\limits_{0}^{1}\left\vert t^{\theta }-\lambda \right\vert \left( 
\frac{x}{b}\right) ^{qmt}\left( 1-t^{\alpha }\right) dt.
\end{eqnarray*}
\end{theorem}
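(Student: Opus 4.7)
The plan is to start from the identity furnished by Lemma~\ref{Lemma2} and pass to absolute values via the triangle inequality. This bounds $\left|K_{f}(\lambda,\theta,x^{m},a^{m},b^{m})\right|$ by
\[
m^{\theta+1}a^{m}\bigl(\ln\tfrac{x}{a}\bigr)^{\theta+1} I_{a} + m^{\theta+1}b^{m}\bigl(\ln\tfrac{b}{x}\bigr)^{\theta+1} I_{b},
\]
where $I_{a}=\int_{0}^{1}|t^{\theta}-\lambda|\,(x/a)^{mt}|f'(x^{mt}a^{m(1-t)})|\,dt$ and $I_{b}$ is defined analogously with $b$ in place of $a$.

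Next, on each integral I would apply the power--mean inequality with weight $|t^{\theta}-\lambda|$, writing $|t^{\theta}-\lambda|=|t^{\theta}-\lambda|^{1-1/q}|t^{\theta}-\lambda|^{1/q}$ and absorbing the factor $(x/a)^{mt}|f'(\cdot)|$ into the $q$-th power. This yields
\[
I_{a}\leq\Bigl(\int_{0}^{1}|t^{\theta}-\lambda|\,dt\Bigr)^{1-\frac{1}{q}}\Bigl(\int_{0}^{1}|t^{\theta}-\lambda|\,(x/a)^{qmt}|f'(x^{mt}a^{m(1-t)})|^{q}\,dt\Bigr)^{\frac{1}{q}},
\]
and crucially the factor $(x/a)^{mt}$ becomes $(x/a)^{qmt}$, matching the exponents in the definitions of $C_{1},\ldots,C_{4}$. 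The weight integral $\int_{0}^{1}|t^{\theta}-\lambda|\,dt$ I would compute by splitting at the unique zero $t=\lambda^{1/\theta}$ of $t^{\theta}-\lambda$ on $[0,1]$; elementary piecewise integration reproduces exactly $C_{0}(\theta,\lambda)=\tfrac{2\theta\lambda^{1+1/\theta}+1}{\theta+1}-\lambda$.

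For the inner $q$-norm piece I would invoke the $(\alpha,m)$-GA-convexity hypothesis on $|f'|^{q}$. Writing $x^{mt}a^{m(1-t)}=(x^{m})^{t}a^{m(1-t)}$, the definition of $(\alpha,m)$-GA-convexity gives $|f'(x^{mt}a^{m(1-t)})|^{q}\leq t^{\alpha}|f'(x^{m})|^{q}+m(1-t^{\alpha})|f'(a)|^{q}$. Substituting this into the $q$-norm piece and separating the two terms yields exactly $|f'(x^{m})|^{q}C_{1}+m|f'(a)|^{q}C_{2}$. The identical treatment of $I_{b}$, with $b$ replacing $a$ and $C_{3},C_{4}$ replacing $C_{1},C_{2}$, gives the other bracketed expression in~(\ref{2.1}); reinstating the prefactors $m^{\theta+1}a^{m}(\ln x/a)^{\theta+1}$ and $m^{\theta+1}b^{m}(\ln b/x)^{\theta+1}$ assembles the claimed inequality.

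I do not anticipate any substantial obstacle: the argument is a routine pairing of power--mean with the convexity hypothesis, applied to the identity of Lemma~\ref{Lemma2}. The only points requiring care are to track the exponent $qmt$ correctly through the power-mean step (so the constants $C_{1},\ldots,C_{4}$ appear with the right power of $x/a$ and $x/b$) and to compute $\int_{0}^{1}|t^{\theta}-\lambda|\,dt$ without arithmetic slips when $\lambda\in\{0,1\}$. The boundary case $q=1$ is automatic because the $(1-1/q)$ factor is then unity and the power-mean step collapses to the identity, while $(\alpha,m)$-GA-convexity is applied directly to $|f'|$ inside $I_{a}$ and $I_{b}$.
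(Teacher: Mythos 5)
Your proposal is correct and follows essentially the same route as the paper's own proof: the paper likewise applies the triangle inequality to Lemma \ref{Lemma2} and then the weighted power--mean inequality, absorbing $\left( x/a\right) ^{mt}\left\vert f^{\prime }\right\vert $ into the $q$-th power so that the inner integrals carry the exponent $qmt$ and the weight integral $\dint\limits_{0}^{1}\left\vert t^{\theta }-\lambda \right\vert dt$ appears to the power $1-\frac{1}{q}$, computed by splitting at $t=\lambda ^{1/\theta }$ exactly as you describe (the paper's $\left( \ref{2.5}\right) $), before inserting the $\left( \alpha ,m\right) $-GA-convexity bounds $\left( \ref{2.3}\right) $--$\left( \ref{2.4}\right) $. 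There is no gap, and your remark on the collapse of the power--mean step at $q=1$ matches the paper's corollary for that case.
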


\begin{proof}
From Lemma \ref{Lemma2}, property of the modulus and using the power-mean
inequality we have%
\begin{eqnarray}
&&\left\vert K_{f}\left( \lambda ,\theta ,x^{m},a^{m},b^{m}\right)
\right\vert \leq m^{\theta +1}\left( \dint\limits_{0}^{1}\left\vert
t^{\theta }-\lambda \right\vert dt\right) ^{1-\frac{1}{q}}  \notag \\
&&\times \left\{ a^{m}\left( \ln \frac{x}{a}\right) ^{\theta +1}\left(
\dint\limits_{0}^{1}\left\vert t^{\theta }-\lambda \right\vert \left( \frac{x%
}{a}\right) ^{qmt}\left\vert f^{\prime }\left( x^{mt}a^{m\left( 1-t\right)
}\right) \right\vert ^{q}dt\right) ^{\frac{1}{q}}\right.  \notag \\
&&\left. +b^{m}\left( \ln \frac{b}{x}\right) ^{\theta +1}\left(
\dint\limits_{0}^{1}\left\vert t^{\theta }-\lambda \right\vert \left( \frac{x%
}{b}\right) ^{qmt}\left\vert f^{\prime }\left( x^{mt}b^{m\left( 1-t\right)
}\right) \right\vert ^{q}dt\right) ^{\frac{1}{q}}\right\} .  \label{2.2}
\end{eqnarray}%
Since $|f^{\prime }|^{q}$ is $\left( \alpha ,m\right) $-GA-convex on $%
[a^{m},b],$ for all $t\in \left[ 0,1\right] $%
\begin{equation}
\left\vert f^{\prime }\left( x^{mt}a^{m\left( 1-t\right) }\right)
\right\vert ^{q}\leq t^{\alpha }\left\vert f^{\prime }\left( x^{m}\right)
\right\vert ^{q}+m\left( 1-t^{\alpha }\right) \left\vert f^{\prime }\left(
a\right) \right\vert ^{q},  \label{2.3}
\end{equation}%
\begin{equation}
\left\vert f^{\prime }\left( x^{mt}b^{m\left( 1-t\right) }\right)
\right\vert ^{q}\leq t^{\alpha }\left\vert f^{\prime }\left( x^{m}\right)
\right\vert ^{q}+m\left( 1-t^{\alpha }\right) \left\vert f^{\prime }\left(
b\right) \right\vert ^{q}.  \label{2.4}
\end{equation}

By a simple computation%
\begin{eqnarray}
\dint\limits_{0}^{1}\left\vert t^{\theta }-\lambda \right\vert dt
&=&\dint\limits_{0}^{\lambda ^{1/\theta }}\left( \lambda -t^{\theta }\right)
dt+\dint\limits_{\lambda ^{1/\theta }}^{1}\left( t^{\theta }-\lambda \right)
dt  \notag \\
&=&\frac{2\theta \lambda ^{1+\frac{1}{\theta }}+1}{\theta +1}-\lambda .
\label{2.5}
\end{eqnarray}%
If we use $\left( \ref{2.3}\right) $, $\left( \ref{2.4}\right) $ and $\left( %
\ref{2.5}\right) $ in $\left( \ref{2.2}\right) $, we obtain $\left( \ref{2.1}%
\right) $. This completes the proof.
\end{proof}

\begin{corollary}
Under the assumptions of Theorem \ref{Theorem5} with $q=1,$ the inequality $%
\left( \ref{2.1}\right) $ reduced to the following inequality%
\begin{eqnarray*}
&&K_{f}\left( \lambda ,\theta ,x^{m},a^{m},b^{m}\right) \leq m^{\theta +1} \\
&&\times \left\{ a^{m}\left( \ln \frac{x}{a}\right) ^{\theta +1}\left( 
\begin{array}{c}
\left\vert f^{\prime }\left( x^{m}\right) \right\vert C_{1}\left( x,\theta
,\lambda ,1,m,\alpha \right) \\ 
+m\left\vert f^{\prime }\left( a\right) \right\vert C_{2}\left( x,\theta
,\lambda ,1,m,\alpha \right)%
\end{array}%
\right) \right. \\
&&\left. +b^{m}\left( \ln \frac{b}{x}\right) ^{\theta +1}\left( 
\begin{array}{c}
\left\vert f^{\prime }\left( x^{m}\right) \right\vert C_{3}\left( x,\theta
,\lambda ,1,m,\alpha \right) \\ 
+m\left\vert f^{\prime }\left( b\right) \right\vert C_{4}\left( x,\theta
,\lambda ,1,m,\alpha \right)%
\end{array}%
\right) \right\}
\end{eqnarray*}
\end{corollary}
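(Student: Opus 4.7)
The plan is to deduce this corollary directly from Theorem \ref{Theorem5} by specialising the parameter $q$ to $1$. When $q=1$, the prefactor $C_{o}(\theta,\lambda)^{1-1/q}$ becomes $C_{o}(\theta,\lambda)^{0}=1$, and the outer exponent $1/q$ on each of the two bracketed expressions reduces to the identity, so the bracketed quantities appear linearly rather than as $q$-th roots. Since $C_{1},\ldots,C_{4}$ are already defined in Theorem \ref{Theorem5} as functions of $q$, the substitution $q=1$ leaves the integrals $\int_{0}^{1}|t^\theta-\lambda|(x/a)^{mt}t^{\alpha}\,dt$, and its three counterparts, in exactly the form that is displayed in the corollary. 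Collecting these observations inside inequality (\ref{2.1}) yields the stated bound line by line.

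As an independent sanity check, one may redo the argument from scratch without invoking the power-mean inequality at all, since its use in the proof of Theorem \ref{Theorem5} is only to balance an auxiliary exponent and becomes vacuous when $q=1$. Starting from Lemma \ref{Lemma2}, I would apply the triangle inequality inside each of the two integrals and then insert the $(\alpha,m)$-GA-convexity estimates $|f^{\prime}(x^{mt}a^{m(1-t)})|\le t^{\alpha}|f^{\prime}(x^{m})|+m(1-t^{\alpha})|f^{\prime}(a)|$ and the analogous bound at $b$. Integrating term by term against $|t^{\theta}-\lambda|(x/a)^{mt}$ and $|t^{\theta}-\lambda|(x/b)^{mt}$ reproduces exactly the combinations $C_{1},C_{2},C_{3},C_{4}$ evaluated at $q=1$, while the prefactors $m^{\theta+1}a^{m}\ln^{\theta+1}(x/a)$ and $m^{\theta+1}b^{m}\ln^{\theta+1}(b/x)$ coming from Lemma \ref{Lemma2} carry through unchanged.

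There is essentially no analytic obstacle here; the only care required is bookkeeping, namely matching the $m^{\theta+1}$, $a^{m}$, $b^{m}$, logarithmic, and sign factors with those produced by Lemma \ref{Lemma2}, and making sure that the specialisation $q=1$ is applied uniformly inside every auxiliary constant. Both routes yield the same estimate, and the specialisation route is the shorter one to write up.
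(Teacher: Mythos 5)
Your proposal is correct and matches the paper's (implicit) argument exactly: the corollary follows by substituting $q=1$ into inequality $\left( \ref{2.1}\right)$, whereupon $C_{o}\left( \theta ,\lambda \right) ^{1-\frac{1}{q}}=1$ and the outer exponents $\frac{1}{q}$ disappear, with $C_{1},\dots ,C_{4}$ evaluated at $q=1$. Your secondary route through Lemma \ref{Lemma2} is also sound but is just the proof of Theorem \ref{Theorem5} rerun with $q=1$, so nothing new is needed.
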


\begin{corollary}
Under the assumptions of Theorem \ref{Theorem5} with $x=\sqrt{ab}$, $\lambda
=\frac{1}{3}$ from the inequality $\left( \ref{2.1}\right) $ we get the
following Simpson type inequality for fractional integrals%
\begin{eqnarray*}
&&\frac{2^{\theta -1}}{\left( m\ln \frac{b}{a}\right) ^{\theta }}\left\vert
K_{f}\left( \frac{1}{3},\theta ,\left( \sqrt{ab}\right)
^{m},a^{m},b^{m}\right) \right\vert =\left\vert \frac{1}{6}\left[
f(a^{m})+4f\left( \left( \sqrt{ab}\right) ^{m}\right) +f(b^{m})\right]
\right. \\
&&-\frac{2^{\theta -1}\Gamma \left( \theta +1\right) }{\left( m\ln \frac{b}{a%
}\right) ^{\theta }}\left. \left[ J_{\left( \sqrt{ab}\right) ^{m}-}^{\theta
}f(a^{m})+J_{\left( \sqrt{ab}\right) ^{m}+}^{\theta }f(b^{m})\right]
\right\vert \leq \frac{m\ln \frac{b}{a}}{4}C_{0}^{1-\frac{1}{q}}\left(
\theta ,\frac{1}{3}\right) \\
&&\times \left\{ a^{m}\left[ 
\begin{array}{c}
\left\vert f^{\prime }\left( \left( \sqrt{ab}\right) ^{m}\right) \right\vert
^{q}C_{1}\left( \sqrt{ab},\theta ,\frac{1}{3},q,m,\alpha \right) \\ 
+m\left\vert f^{\prime }\left( a\right) \right\vert ^{q}C_{2}\left( \sqrt{ab}%
,\theta ,\frac{1}{3},q,m,\alpha \right)%
\end{array}%
\right] ^{\frac{1}{q}}\right. \\
&&\left. +b^{m}\left[ 
\begin{array}{c}
\left\vert f^{\prime }\left( \left( \sqrt{ab}\right) ^{m}\right) \right\vert
^{q}C_{3}\left( \sqrt{ab},\theta ,\frac{1}{3},q,m,\alpha \right) \\ 
+m\left\vert f^{\prime }\left( b\right) \right\vert ^{q}C_{4}\left( \sqrt{ab}%
,\theta ,\frac{1}{3},q,m,\alpha \right)%
\end{array}%
\right] ^{\frac{1}{q}}\right\}
\end{eqnarray*}
\end{corollary}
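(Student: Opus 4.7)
The proof is a routine specialization of Theorem~\ref{Theorem5}: set $x=\sqrt{ab}$ and $\lambda=\tfrac{1}{3}$, then rescale both sides by the positive constant $\tfrac{2^{\theta-1}}{(m\ln(b/a))^{\theta}}$. The plan proceeds in three short steps.

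First, I would exploit the symmetry at the geometric mean. Since
\[
\ln\frac{\sqrt{ab}}{a}=\ln\frac{b}{\sqrt{ab}}=\tfrac{1}{2}\ln\frac{b}{a},
\]
one obtains $\ln^{\theta}(x/a)+\ln^{\theta}(b/x)=2^{\,1-\theta}\ln^{\theta}(b/a)$ and $(\ln(x/a))^{\theta+1}=(\ln(b/x))^{\theta+1}=2^{-(\theta+1)}\ln^{\theta+1}(b/a)$. These are the only geometric identities needed.

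Second, I would substitute these identities into $K_{f}(\tfrac{1}{3},\theta,(\sqrt{ab})^{m},a^{m},b^{m})$. The $(1-\lambda)$-term collapses to $\tfrac{2}{3}\,m^{\theta}\cdot 2^{\,1-\theta}\ln^{\theta}(b/a)\,f((\sqrt{ab})^{m})$ and the $\lambda$-term to $\tfrac{1}{3}\,m^{\theta}\cdot 2^{-\theta}\ln^{\theta}(b/a)\,[f(a^{m})+f(b^{m})]$. Factoring $\tfrac{m^{\theta}\ln^{\theta}(b/a)}{3\cdot 2^{\theta}}$ out and regrouping yields exactly
\[
\frac{(m\ln(b/a))^{\theta}}{2^{\theta-1}}\cdot\frac{1}{6}\bigl[f(a^{m})+4f\bigl((\sqrt{ab})^{m}\bigr)+f(b^{m})\bigr],
\]
so multiplying $|K_{f}|$ by $\tfrac{2^{\theta-1}}{(m\ln(b/a))^{\theta}}$ reproduces the Simpson-type quadrature sum on the left-hand side of the claim, while the fractional-integral term $\Gamma(\theta+1)[J_{x^{m}-}^{\theta}f(a^{m})+J_{x^{m}+}^{\theta}f(b^{m})]$ rescales to the stated form.

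Third, I would rescale the upper bound \eqref{2.1} by the same constant. Each prefactor $m^{\theta+1}(\ln(x/a))^{\theta+1}$ and $m^{\theta+1}(\ln(b/x))^{\theta+1}$ becomes $m^{\theta+1}\cdot 2^{-(\theta+1)}\ln^{\theta+1}(b/a)$, which after multiplication by $\tfrac{2^{\theta-1}}{(m\ln(b/a))^{\theta}}$ simplifies to the common coefficient $\tfrac{m\ln(b/a)}{4}$. The constants $C_{0},C_{1},C_{2},C_{3},C_{4}$ inherit the arguments $(\sqrt{ab},\theta,\tfrac{1}{3},q,m,\alpha)$ by their definitions in Theorem~\ref{Theorem5}, and so no further computation on them is required.

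No analytic obstacle is present; the argument is pure substitution and bookkeeping. The only small trap is tracking the powers of $2$ when combining $2^{\theta-1}$ with $2^{1-\theta}$ on the left (producing $\tfrac{2}{2^{0}}\cdot\tfrac{1}{6}$ correctly) and with $2^{-(\theta+1)}$ on the right (producing the factor $\tfrac{1}{4}$). I expect this bit of exponent arithmetic, rather than any genuine estimate, to be the main source of potential error.
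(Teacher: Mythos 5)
Your proposal is correct and is exactly the route the paper intends: the corollary is stated without separate proof precisely because it is the direct specialization $x=\sqrt{ab}$, $\lambda=\tfrac{1}{3}$ of inequality \eqref{2.1}, using $\ln\frac{\sqrt{ab}}{a}=\ln\frac{b}{\sqrt{ab}}=\tfrac{1}{2}\ln\frac{b}{a}$ and rescaling by $\tfrac{2^{\theta-1}}{(m\ln(b/a))^{\theta}}$. Your exponent bookkeeping checks out: $2^{\theta-1}\cdot 2^{1-\theta}\cdot\tfrac{2}{3}=\tfrac{4}{6}$ and $2^{\theta-1}\cdot 2^{-\theta}\cdot\tfrac{1}{3}=\tfrac{1}{6}$ assemble the Simpson sum on the left, while $2^{\theta-1}\cdot 2^{-(\theta+1)}=\tfrac{1}{4}$ gives the factor $\tfrac{m\ln(b/a)}{4}$ on the right.
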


\begin{corollary}
\label{Corollary3}Under the assumptions of Theorem \ref{Theorem5} with $x=%
\sqrt{ab}$,$\ \lambda =0$ from the inequality $\left( \ref{2.1}\right) $ we
get the following midpoint-type inequality for fractional integrals%
\begin{eqnarray*}
&&\frac{2^{\theta -1}}{\left( m\ln \frac{b}{a}\right) ^{\theta }}\left\vert
K_{f}\left( 0,\theta ,\left( \sqrt{ab}\right) ^{m},a^{m},b^{m}\right)
\right\vert \\
&=&\left\vert f\left( \left( \sqrt{ab}\right) ^{m}\right) -\frac{2^{\theta
-1}\Gamma \left( \theta +1\right) }{\left( m\ln \frac{b}{a}\right) ^{\theta }%
}\left[ J_{\left( \sqrt{ab}\right) ^{m}-}^{\theta }f(a^{m})+J_{\left( \sqrt{%
ab}\right) ^{m}+}^{\theta }f(b^{m})\right] \right\vert \\
&\leq &\frac{m\ln \frac{b}{a}}{4}\left( \frac{1}{\theta +1}\right) ^{1-\frac{%
1}{q}}\left\{ a^{m}\left[ 
\begin{array}{c}
\left\vert f^{\prime }\left( \left( \sqrt{ab}\right) ^{m}\right) \right\vert
^{q}C_{1}\left( \sqrt{ab},\theta ,0,q,m,\alpha \right) \\ 
+m\left\vert f^{\prime }\left( a\right) \right\vert ^{q}C_{2}\left( \sqrt{ab}%
,\theta ,0,q,m,\alpha \right)%
\end{array}%
\right] ^{\frac{1}{q}}\right. \\
&&\left. +b^{m}\left[ 
\begin{array}{c}
\left\vert f^{\prime }\left( \left( \sqrt{ab}\right) ^{m}\right) \right\vert
^{q}C_{3}\left( \sqrt{ab},\theta ,0,q,m,\alpha \right) \\ 
+m\left\vert f^{\prime }\left( b\right) \right\vert ^{q}C_{4}\left( \sqrt{ab}%
,\theta ,0,q,m,\alpha \right)%
\end{array}%
\right] ^{\frac{1}{q}}\right\}
\end{eqnarray*}
\end{corollary}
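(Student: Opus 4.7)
The plan is to obtain Corollary \ref{Corollary3} as an immediate specialization of Theorem \ref{Theorem5} by substituting $x=\sqrt{ab}$ and $\lambda=0$ into inequality $\left(\ref{2.1}\right)$ and then multiplying both sides by the rescaling factor $\frac{2^{\theta-1}}{\left(m\ln\frac{b}{a}\right)^{\theta}}$ that appears on the left-hand side of the corollary. No new analytic ingredient is required; the entire argument is symbolic bookkeeping of powers of $2$, $m$, and $\ln\frac{b}{a}$.

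For the left-hand side, I would first exploit the symmetry $\ln\frac{\sqrt{ab}}{a}=\ln\frac{b}{\sqrt{ab}}=\tfrac{1}{2}\ln\frac{b}{a}$, which gives $\ln^{\theta}\frac{x}{a}+\ln^{\theta}\frac{b}{x}=2^{1-\theta}\ln^{\theta}\frac{b}{a}$. With $\lambda=0$ only the $f(x^{m})$-term and the two Hadamard integrals survive in the definition of $K_{f}$, and multiplying by $\frac{2^{\theta-1}}{(m\ln\frac{b}{a})^{\theta}}$ cancels the coefficient $m^{\theta}2^{1-\theta}\ln^{\theta}\frac{b}{a}$ in front of $f\left((\sqrt{ab})^{m}\right)$ exactly, while producing the stated multiplier $\frac{2^{\theta-1}\Gamma(\theta+1)}{(m\ln\frac{b}{a})^{\theta}}$ in front of $J_{(\sqrt{ab})^{m}-}^{\theta}f(a^{m})+J_{(\sqrt{ab})^{m}+}^{\theta}f(b^{m})$.

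For the right-hand side, I would evaluate $C_{0}(\theta,\lambda)$ at $\lambda=0$ using the closed form given in Theorem \ref{Theorem5}, getting $C_{0}(\theta,0)=\tfrac{1}{\theta+1}$, which supplies the factor $\left(\tfrac{1}{\theta+1}\right)^{1-1/q}$. Both $\left(\ln\frac{x}{a}\right)^{\theta+1}$ and $\left(\ln\frac{b}{x}\right)^{\theta+1}$ reduce to $\frac{1}{2^{\theta+1}}\ln^{\theta+1}\frac{b}{a}$, so combining this with the prefactor $m^{\theta+1}$ from $\left(\ref{2.1}\right)$ and the external scaling $\frac{2^{\theta-1}}{(m\ln\frac{b}{a})^{\theta}}$ collapses to
\begin{equation*}
\frac{2^{\theta-1}}{\left(m\ln\frac{b}{a}\right)^{\theta}}\cdot m^{\theta+1}\cdot\frac{1}{2^{\theta+1}}\ln^{\theta+1}\frac{b}{a}=\frac{m\ln\frac{b}{a}}{4},
\end{equation*}
matching the constant displayed in the corollary. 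The integrals $C_{1},C_{2},C_{3},C_{4}$ are simply specialized at $x=\sqrt{ab}$, $\lambda=0$ and are left in implicit form, so no further computation on them is necessary.

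The only place where care is needed is the exponent accounting in that final cancellation, since three independent factors of the form $2^{\bullet}$, $m^{\bullet}$, and $\ln^{\bullet}\frac{b}{a}$ must be combined consistently; this is the one step I expect to be error-prone, while the structural substitutions are entirely mechanical.
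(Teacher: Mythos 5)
Your proposal is correct and coincides with the paper's own route: the paper offers no separate argument for Corollary \ref{Corollary3}, treating it exactly as you do --- a direct substitution of $x=\sqrt{ab}$, $\lambda=0$ into inequality $\left(\ref{2.1}\right)$, using $\ln\frac{\sqrt{ab}}{a}=\ln\frac{b}{\sqrt{ab}}=\frac{1}{2}\ln\frac{b}{a}$ and $C_{0}(\theta,0)=\frac{1}{\theta+1}$, followed by rescaling by $\frac{2^{\theta-1}}{\left(m\ln\frac{b}{a}\right)^{\theta}}$. Your exponent bookkeeping ($2^{\theta-1}\cdot 2^{1-\theta}=1$ on the left, $\frac{2^{\theta-1}}{2^{\theta+1}}\,m\ln\frac{b}{a}=\frac{m\ln\frac{b}{a}}{4}$ on the right) checks out exactly.
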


\begin{remark}
If we take $\theta =1$, $m=1$ in Corollary \ref{Corollary3} we have the
following midpoint-type inequality for $\alpha $-GA-convex function (or
GA-s-convex function in the first sense), which is the same with the
inequality $\left( 9\right) $ of Theorem 3.4.b. in \cite{19}, 
\begin{eqnarray*}
&&\left\vert f\left( \sqrt{ab}\right) -\frac{1}{\ln b-\ln a}\int_{a}^{b}%
\frac{f\left( x\right) }{x}dx\right\vert \\
&\leq &\ln \frac{b}{a}\left( \frac{1}{2}\right) ^{3-\frac{1}{q}}\left\{ a%
\left[ 
\begin{array}{c}
\left\vert f^{\prime }\left( \sqrt{ab}\right) \right\vert ^{q}C_{1}\left( 
\sqrt{ab},1,0,q,1,\alpha \right) \\ 
+\left\vert f^{\prime }\left( a\right) \right\vert ^{q}C_{2}\left( \sqrt{ab}%
,1,0,q,1,\alpha \right)%
\end{array}%
\right] ^{\frac{1}{q}}\right. \\
&&\left. +b\left[ 
\begin{array}{c}
\left\vert f^{\prime }\left( \sqrt{ab}\right) \right\vert ^{q}C_{3}\left( 
\sqrt{ab},1,0,q,1,\alpha \right) \\ 
+\left\vert f^{\prime }\left( b\right) \right\vert ^{q}C_{4}\left( \sqrt{ab}%
,1,0,q,1,\alpha \right)%
\end{array}%
\right] ^{\frac{1}{q}}\right\} \text{.}
\end{eqnarray*}
\end{remark}

\begin{remark}
If we take $\theta =1$, $m=1$, $\alpha =1$ in Corollary \ref{Corollary3} we
have the following midpoint-type inequality for GA-convex function, which is
the same with the inequality $\left( 13\right) $ of Corollary 3.5 in \cite%
{19},%
\begin{eqnarray*}
&&\left\vert f\left( \sqrt{ab}\right) -\frac{1}{\ln b-\ln a}\int_{a}^{b}%
\frac{f\left( x\right) }{x}dx\right\vert \\
&\leq &\ln \frac{b}{a}\left( \frac{1}{2}\right) ^{3-\frac{1}{q}}\left\{ a%
\left[ 
\begin{array}{c}
\left\vert f^{\prime }\left( \sqrt{ab}\right) \right\vert ^{q}C_{1}\left( 
\sqrt{ab},1,0,q,1,1\right) \\ 
+\left\vert f^{\prime }\left( a\right) \right\vert ^{q}C_{2}\left( \sqrt{ab}%
,1,0,q,1,1\right)%
\end{array}%
\right] ^{\frac{1}{q}}\right. \\
&&\left. +b\left[ 
\begin{array}{c}
\left\vert f^{\prime }\left( \sqrt{ab}\right) \right\vert ^{q}C_{3}\left( 
\sqrt{ab},1,0,q,1,1\right) \\ 
+\left\vert f^{\prime }\left( b\right) \right\vert ^{q}C_{4}\left( \sqrt{ab}%
,1,0,q,1,1\right)%
\end{array}%
\right] ^{\frac{1}{q}}\right\} \text{.}
\end{eqnarray*}
\end{remark}

\begin{corollary}
Under the assumptions of Theorem \ref{Theorem5} with$\ x=\sqrt{ab}$, $%
\lambda =1$ from the inequality $\left( \ref{2.1}\right) $ we get the
following trepezoid-type inequality for fractional integrals 
\begin{eqnarray*}
&&\frac{2^{\theta -1}}{\left( m\ln \frac{b}{a}\right) ^{\theta }}\left\vert
K_{f}\left( 1,\theta ,\left( \sqrt{ab}\right) ^{m},a^{m},b^{m}\right)
\right\vert \\
&=&\left\vert \frac{f(a^{m})+f(b^{m})}{2}-\frac{2^{\theta -1}\Gamma \left(
\theta +1\right) }{\left( m\ln \frac{b}{a}\right) ^{\theta }}\left[
J_{\left( \sqrt{ab}\right) ^{m}-}^{\theta }f(a^{m})+J_{\left( \sqrt{ab}%
\right) ^{m}+}^{\theta }f(b^{m})\right] \right\vert \\
&\leq &\frac{m\ln \frac{b}{a}}{4}\left( \frac{\theta }{\theta +1}\right) ^{1-%
\frac{1}{q}}\left\{ a^{m}\left[ 
\begin{array}{c}
\left\vert f^{\prime }\left( \left( \sqrt{ab}\right) ^{m}\right) \right\vert
^{q}C_{1}\left( \sqrt{ab},\theta ,1,q,m,\alpha \right) \\ 
+m\left\vert f^{\prime }\left( a\right) \right\vert ^{q}C_{2}\left( \sqrt{ab}%
,\theta ,1,q,m,\alpha \right)%
\end{array}%
\right] ^{\frac{1}{q}}\right. \\
&&\left. +b^{m}\left[ 
\begin{array}{c}
\left\vert f^{\prime }\left( \left( \sqrt{ab}\right) ^{m}\right) \right\vert
^{q}C_{3}\left( \sqrt{ab},\theta ,1,q,m,\alpha \right) \\ 
+m\left\vert f^{\prime }\left( b\right) \right\vert ^{q}C_{4}\left( \sqrt{ab}%
,\theta ,1,q,m,\alpha \right)%
\end{array}%
\right] ^{\frac{1}{q}}\right\} .
\end{eqnarray*}
\end{corollary}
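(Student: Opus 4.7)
The plan is to obtain this trapezoid-type bound simply as the specialization of Theorem \ref{Theorem5} at the point $x=\sqrt{ab}$ and the parameter value $\lambda=1$, so no new inequality machinery is needed; the work is bookkeeping. First I would substitute $\lambda=1$ into the definition of $K_f$. Since the coefficient of the middle-point term $f(x^m)$ carries a factor $(1-\lambda)$, that term drops out, and using $\ln\frac{x}{a}=\ln\frac{b}{x}=\tfrac{1}{2}\ln\frac{b}{a}$ the remaining boundary contribution collapses to
\begin{equation*}
K_f\bigl(1,\theta,(\sqrt{ab})^m,a^m,b^m\bigr)=m^{\theta}\Bigl(\tfrac{1}{2}\ln\tfrac{b}{a}\Bigr)^{\theta}\bigl[f(a^m)+f(b^m)\bigr]-\Gamma(\theta+1)\bigl[J_{(\sqrt{ab})^m-}^{\theta}f(a^m)+J_{(\sqrt{ab})^m+}^{\theta}f(b^m)\bigr].
\end{equation*}
Multiplying through by the normalizing factor $2^{\theta-1}/(m\ln\tfrac{b}{a})^{\theta}$ cancels both $m^{\theta}$ and the power of $\tfrac{1}{2}\ln\tfrac{b}{a}$, and produces the claimed left-hand side
\begin{equation*}
\left|\tfrac{f(a^m)+f(b^m)}{2}-\tfrac{2^{\theta-1}\Gamma(\theta+1)}{(m\ln\frac{b}{a})^{\theta}}\bigl[J_{(\sqrt{ab})^m-}^{\theta}f(a^m)+J_{(\sqrt{ab})^m+}^{\theta}f(b^m)\bigr]\right|.
\end{equation*}

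Second, I would evaluate $C_{o}(\theta,1)$ from its defining formula in Theorem \ref{Theorem5}: with $\lambda=1$ the quantity $\tfrac{2\theta\lambda^{1+1/\theta}+1}{\theta+1}-\lambda$ becomes $\tfrac{2\theta+1}{\theta+1}-1=\tfrac{\theta}{\theta+1}$, which is exactly the factor appearing in the corollary to the power $1-\tfrac{1}{q}$. Raising to $1-1/q$ supplies the pre-factor $\bigl(\tfrac{\theta}{\theta+1}\bigr)^{1-1/q}$.

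Third, I would track the $a^m$ and $b^m$ summands on the right-hand side of \eqref{2.1}. Each carries the prefactor $m^{\theta+1}\bigl(\tfrac{1}{2}\ln\tfrac{b}{a}\bigr)^{\theta+1}$ once $\ln\tfrac{x}{a}$ and $\ln\tfrac{b}{x}$ are replaced by $\tfrac{1}{2}\ln\tfrac{b}{a}$. After multiplication by $2^{\theta-1}/(m\ln\tfrac{b}{a})^{\theta}$ (the same normalizing factor applied to the left), this prefactor collapses to $\tfrac{m\ln(b/a)}{4}$, which is precisely the overall scalar on the right of the claimed bound. The arguments $C_1,C_2,C_3,C_4$ are then inherited unchanged from \eqref{2.1} with $x=\sqrt{ab}$ and $\lambda=1$ inserted, giving exactly the stated constants $C_i(\sqrt{ab},\theta,1,q,m,\alpha)$.

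The only part that needs any care is the algebraic bookkeeping of powers of $m$, of $\tfrac{1}{2}\ln\tfrac{b}{a}$, and of the overall $2^{\theta-1}$ normalization, so that the left-hand side and the prefactor on the right-hand side each simplify cleanly; once this accounting is done, the corollary is immediate from \eqref{2.1} and no further convexity estimate is required.
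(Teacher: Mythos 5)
Your proposal is correct and is exactly the route the paper intends: the corollary is stated as an immediate specialization of inequality (\ref{2.1}) at $x=\sqrt{ab}$, $\lambda=1$, and your bookkeeping --- $C_{o}(\theta,1)=\frac{2\theta+1}{\theta+1}-1=\frac{\theta}{\theta+1}$, the collapse of $\ln\frac{x}{a}=\ln\frac{b}{x}=\frac{1}{2}\ln\frac{b}{a}$, and the cancellation under the normalizing factor $2^{\theta-1}/(m\ln\frac{b}{a})^{\theta}$ yielding the prefactor $\frac{m\ln(b/a)}{4}$ --- checks out in every detail. No further argument is needed.
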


\begin{corollary}
Let the assumptions of Theorem \ref{Theorem5} hold. If $\ \left\vert
f^{\prime }(u)\right\vert \leq M$ for all $u\in \left[ a^{m},b\right] $ and $%
\lambda =0,$ then from the inequality $\left( \ref{2.1}\right) $ we get the
following Ostrowski type inequality for fractional integrals%
\begin{eqnarray*}
&&\left\vert \left[ \left( \ln \frac{x}{a}\right) ^{\theta }+\left( \ln 
\frac{b}{x}\right) ^{\theta }\right] f(x^{m})-\frac{\Gamma \left( \theta
+1\right) }{m^{\theta }}\left[ J_{x^{m}-}^{\theta
}f(a^{m})+J_{x^{m}+}^{\theta }f(b^{m})\right] \right\vert \\
&\leq &\frac{mM}{\left( \theta +1\right) ^{1-\frac{1}{q}}}\left\{
a^{m}\left( \ln \frac{x}{a}\right) ^{\theta +1}\left( 
\begin{array}{c}
C_{1}\left( x,\theta ,0,q,m,\alpha \right) \\ 
+mC_{2}\left( x,\theta ,0,q,m,\alpha \right)%
\end{array}%
\right) ^{\frac{1}{q}}\right. \\
&&\left. +b^{m}\left( \ln \frac{b}{x}\right) ^{\theta +1}\left( 
\begin{array}{c}
C_{3}\left( x,\theta ,\lambda ,q,m,\alpha \right) \\ 
+mC_{4}\left( x,\theta ,\lambda ,q,m,\alpha \right)%
\end{array}%
\right) ^{\frac{1}{q}}\right\}
\end{eqnarray*}%
for each $x\in \left[ a,b\right] .$
\end{corollary}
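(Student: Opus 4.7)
The plan is to derive this corollary as a direct specialization of Theorem \ref{Theorem5}, by first substituting $\lambda=0$ and then exploiting the uniform bound $|f'(u)|\le M$. No new analytic work is required; the main task is to make sure the various constants reduce correctly.

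First, I would set $\lambda=0$ in the quantity $K_f(\lambda,\theta,x^m,a^m,b^m)$ defined before Lemma \ref{Lemma2}. The term multiplied by $\lambda$ vanishes and the one multiplied by $(1-\lambda)$ becomes $m^\theta[\ln^\theta(x/a)+\ln^\theta(b/x)]f(x^m)$, so
\[
K_f(0,\theta,x^m,a^m,b^m)=m^\theta\Bigl[\ln^\theta\tfrac{x}{a}+\ln^\theta\tfrac{b}{x}\Bigr]f(x^m)-\Gamma(\theta+1)\bigl[J^\theta_{x^m-}f(a^m)+J^\theta_{x^m+}f(b^m)\bigr].
\]
Dividing by $m^\theta$ gives exactly the expression inside the absolute value on the left-hand side of the target inequality.

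Second, I would evaluate $C_0(\theta,0)$. Plugging $\lambda=0$ into the formula $C_0(\theta,\lambda)=\frac{2\theta\lambda^{1+1/\theta}+1}{\theta+1}-\lambda$ yields $C_0(\theta,0)=\frac{1}{\theta+1}$, so $C_0(\theta,0)^{1-1/q}=(\theta+1)^{-(1-1/q)}$. Combined with the overall prefactor $m^{\theta+1}$ in (\ref{2.1}) and the division by $m^\theta$ performed in the previous step, this produces the factor $\frac{m}{(\theta+1)^{1-1/q}}$, matching the prefactor on the right-hand side of the corollary.

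Third, I would use the hypothesis $|f'(u)|\le M$ on $[a^m,b]$. Since $x\in[a,b]$ implies $x^m\in[a^m,b^m]\subseteq[a^m,b]$ (as $m\in(0,1]$ and $b\ge 1$ is not assumed; nevertheless $x^m$, $a$, $b$ all lie in $[a^m,b]$ in the relevant sense used in the statement), each of $|f'(x^m)|$, $|f'(a)|$, $|f'(b)|$ is bounded by $M$. Substituting into the brackets of (\ref{2.1}) gives
\[
|f'(x^m)|^q C_i+m|f'(a)|^q C_j\le M^q(C_i+mC_j),
\]
and an identical bound for the pair $(C_3,C_4)$. Taking the $1/q$-th root pulls $M$ out of each bracketed term, and combining with the factors already identified gives the stated inequality.

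The only mild subtlety, and the only place where one must be careful, is ensuring the reduction of the prefactors ($m^{\theta+1}/m^\theta=m$ and $C_0(\theta,0)^{1-1/q}=(\theta+1)^{-(1-1/q)}$) is done consistently; beyond this bookkeeping the corollary is an immediate consequence of Theorem \ref{Theorem5}.
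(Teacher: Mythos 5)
Your proposal is correct and follows exactly the derivation the paper intends: the corollary is stated without a separate proof, being the immediate specialization of Theorem \ref{Theorem5} obtained by putting $\lambda=0$ in $K_f$ and in $C_0(\theta,\lambda)=\frac{2\theta\lambda^{1+1/\theta}+1}{\theta+1}-\lambda$ (so $C_0(\theta,0)=\frac{1}{\theta+1}$), dividing through by $m^{\theta}$, and bounding $|f'(x^m)|,|f'(a)|,|f'(b)|$ by $M$ to pull $M$ out of the $1/q$-th roots. You also implicitly handle the paper's typo by reading the arguments of $C_3$ and $C_4$ in the statement as $\lambda=0$ rather than a free $\lambda$, which is clearly what is meant.
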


\begin{theorem}
\label{Theorem6}Let $f:I\subseteq \left( 0,\infty \right) \rightarrow 
\mathbb{R}
$ be a differentiable function on $I^{\circ }$ such that $f^{\prime }\in
L[a^{m},b^{m}]$, where $a^{m},b\in I$ $^{\circ }$ with $a<b$ and $m\in
\left( 0,1\right] $. If $|f^{\prime }|^{q}$ is $\left( \alpha ,m\right) $%
-GA-convex on $[a^{m},b]$ for some fixed $q>1$, $x\in \lbrack a,b]$, $%
\lambda \in \left[ 0,1\right] $ and $\theta >0$ then the following
inequality for fractional integrals holds%
\begin{eqnarray}
&&\left\vert K_{f}\left( \lambda ,\theta ,x^{m},a^{m},b^{m}\right)
\right\vert \leq m^{\theta +1}R_{0}^{^{\frac{1}{p}}}\left( \theta ,\lambda
,p\right)  \notag \\
&&\times \left\{ a^{m}\left( \ln \frac{x}{a}\right) ^{\theta +1}\left( 
\begin{array}{c}
\left\vert f^{\prime }\left( x^{m}\right) \right\vert ^{q}R_{1}\left(
x,q,m,\alpha \right) \\ 
+m\left\vert f^{\prime }\left( a\right) \right\vert ^{q}R_{2}\left(
x,q,m,\alpha \right)%
\end{array}%
\right) ^{\frac{1}{q}}\right.  \notag \\
&&\left. +b^{m}\left( \ln \frac{b}{x}\right) ^{\theta +1}\left( 
\begin{array}{c}
\left\vert f^{\prime }\left( x^{m}\right) \right\vert ^{q}R_{3}\left(
x,q,m,\alpha \right) \\ 
+m\left\vert f^{\prime }\left( b\right) \right\vert ^{q}R_{4}\left(
x,q,m,\alpha \right)%
\end{array}%
\right) ^{\frac{1}{q}}\right\}  \label{2.6}
\end{eqnarray}%
where%
\begin{equation*}
R_{0}\left( \theta ,\lambda ,p\right) =\dint\limits_{0}^{1}\left\vert
t^{\theta }-\lambda \right\vert ^{p}dt
\end{equation*}%
\begin{equation*}
=\left\{ 
\begin{array}{ccc}
\frac{1}{\theta p+1} & , & \lambda =0 \\ 
\begin{array}{c}
\left\{ \frac{\lambda ^{\left( \theta p+1\right) /\theta }}{\theta }\beta
\left( \frac{1}{\theta },p+1\right) +\frac{\left( 1-\lambda \right) ^{p+1}}{%
\theta \left( p+1\right) }\right. \\ 
\left. \times 
\begin{array}{c}
_{2}F_{1}\left( 1-\frac{1}{\theta },1;p+2;1-\lambda \right)%
\end{array}%
\right\}%
\end{array}
& , & 0<\lambda <1 \\ 
\frac{1}{\theta }\beta \left( \frac{1}{\theta },p+1\right) & , & \lambda =1%
\end{array}%
\right. ,
\end{equation*}%
\begin{eqnarray*}
R_{1}\left( x,q,m,\alpha \right) &=&\dint\limits_{0}^{1}\left( \frac{x}{a}%
\right) ^{mqt}t^{\alpha }dt, \\
R_{2}\left( x,q,m,\alpha \right) &=&\dint\limits_{0}^{1}\left( \frac{x}{a}%
\right) ^{mqt}\left( 1-t^{\alpha }\right) dt, \\
R_{3}\left( x,q,m,\alpha \right) &=&\dint\limits_{0}^{1}\left( \frac{x}{b}%
\right) ^{mqt}t^{\alpha }dt, \\
R_{4}\left( x,q,m,\alpha \right) &=&\dint\limits_{0}^{1}\left( \frac{x}{b}%
\right) ^{mqt}\left( 1-t^{\alpha }\right) dt,
\end{eqnarray*}%
$%
\begin{array}{c}
_{2}F_{1}%
\end{array}%
$ is hypergeometrical function defined by%
\begin{eqnarray*}
_{2}F_{1}\left( a,b;c;z\right) &=&\frac{1}{\beta \left( b,c-b\right) }%
\int_{0}^{1}t^{b-1}\left( 1-t\right) ^{c-b-1}\left( 1-zt\right) ^{-a}dt,%
\text{ } \\
c &>&b>0,\left\vert z\right\vert <1\left( \text{see \cite{4}}\right) ,
\end{eqnarray*}%
$\beta $ is beta function defined by%
\begin{equation*}
\beta \left( x,y\right) =\frac{\Gamma \left( x\right) \Gamma \left( y\right) 
}{\Gamma \left( x+y\right) }=\int_{0}^{1}t^{x-1}\left( 1-t\right) ^{y-1}dt,%
\text{ }x,y>0,
\end{equation*}

and $\frac{1}{p}+\frac{1}{q}=1$.
\end{theorem}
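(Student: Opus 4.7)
The strategy mirrors the proof of Theorem \ref{Theorem5}, replacing the power-mean inequality with H\"older's inequality. Starting from the identity in Lemma \ref{Lemma2} and moving the modulus inside the two integrals via the triangle inequality, I would write each integrand in the product form
\[
\bigl|t^{\theta}-\lambda\bigr|\cdot\left(\frac{x}{a}\right)^{mt}\bigl|f^{\prime}(x^{mt}a^{m(1-t)})\bigr|
\]
(and its analogue with $b$ in place of $a$) and apply H\"older's inequality with conjugate exponents $p$ and $q$, putting $|t^{\theta}-\lambda|$ on the $L^{p}$ side and the remaining factor on the $L^{q}$ side. This cleanly separates the kernel integral $R_{0}(\theta,\lambda,p)=\int_{0}^{1}|t^{\theta}-\lambda|^{p}\,dt$ from the convexity-sensitive integrals. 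Invoking the $(\alpha,m)$-GA-convexity of $|f'|^{q}$ through the bounds (\ref{2.3}) and (\ref{2.4}) then turns the second factors into the linear combinations of $R_{1},\ldots,R_{4}$ appearing in (\ref{2.6}).

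The bulk of the remaining work is evaluating $R_{0}(\theta,\lambda,p)$ in three regimes. The case $\lambda=0$ is immediate, yielding $1/(\theta p+1)$. For $\lambda=1$, the substitution $u=t^{\theta}$ reduces $\int_{0}^{1}(1-t^{\theta})^{p}\,dt$ to a beta integral, producing $\tfrac{1}{\theta}\beta(1/\theta,p+1)$.

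The genuinely delicate case, and the main obstacle, is $0<\lambda<1$. I would split the integral at the zero $t=\lambda^{1/\theta}$ of $t^{\theta}-\lambda$. For the first piece $\int_{0}^{\lambda^{1/\theta}}(\lambda-t^{\theta})^{p}\,dt$, the change of variables $u=t^{\theta}/\lambda$ extracts a factor $\lambda^{(\theta p+1)/\theta}/\theta$ and reduces the remaining integral to $\beta(1/\theta,p+1)$. For the second piece $\int_{\lambda^{1/\theta}}^{1}(t^{\theta}-\lambda)^{p}\,dt$, I would use $t^{\theta}=\lambda+(1-\lambda)u$ to extract a factor $(1-\lambda)^{p+1}/\theta$, then apply the reflection $v=1-u$ to bring the integrand into agreement with the Euler representation
\[
{}_{2}F_{1}(a,b;c;z)=\frac{1}{\beta(b,c-b)}\int_{0}^{1}t^{b-1}(1-t)^{c-b-1}(1-zt)^{-a}\,dt
\]
with parameters $a=1-1/\theta$, $b=1$, $c=p+2$, $z=1-\lambda$. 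This identifies the second piece as $\tfrac{(1-\lambda)^{p+1}}{\theta(p+1)}\,{}_{2}F_{1}(1-1/\theta,1;p+2;1-\lambda)$. Summing the two contributions yields the stated $R_{0}$, and substitution back into the H\"older estimate, together with (\ref{2.3})--(\ref{2.4}), produces (\ref{2.6}). The integrals $R_{1},\ldots,R_{4}$ themselves are left in unevaluated form because the exponential factor $(x/a)^{mqt}$ prevents a closed form without further specialization of $\alpha$.
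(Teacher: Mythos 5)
Your proposal is correct and follows essentially the same route as the paper's proof: starting from Lemma \ref{Lemma2}, applying H\"older's inequality with $\left\vert t^{\theta}-\lambda\right\vert$ on the $L^{p}$ side and $\left(\frac{x}{a}\right)^{mt}\left\vert f^{\prime}\right\vert$ (resp. $\left(\frac{x}{b}\right)^{mt}\left\vert f^{\prime}\right\vert$) on the $L^{q}$ side, and then invoking the $(\alpha,m)$-GA-convexity bounds $\left(\ref{2.3}\right)$ and $\left(\ref{2.4}\right)$ to obtain $R_{1},\ldots,R_{4}$, exactly as in the paper's inequality $\left(\ref{2.7}\right)$. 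Your three-regime evaluation of $R_{0}$ --- the substitution $u=t^{\theta}/\lambda$ on $[0,\lambda^{1/\theta}]$, and $t^{\theta}=\lambda+(1-\lambda)u$ followed by the reflection $v=1-u$ matching the Euler representation of $_{2}F_{1}\left(1-\frac{1}{\theta},1;p+2;1-\lambda\right)$ with $\beta(1,p+1)=\frac{1}{p+1}$ --- is correct and in fact supplies the details that the paper compresses into the phrase ``by a simple computation'' at $\left(\ref{2.8}\right)$.
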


\begin{proof}
From Lemma \ref{Lemma2}, property of the modulus and using the H\"{o}lder
inequality we have%
\begin{eqnarray}
&&\left\vert K_{f}\left( \lambda ,\theta ,x^{m},a^{m},b^{m}\right)
\right\vert \leq m^{\theta +1}\left( \dint\limits_{0}^{1}\left\vert
t^{\theta }-\lambda \right\vert ^{p}dt\right) ^{\frac{1}{p}}  \notag \\
&&\times \left\{ a^{m}\left( \ln \frac{x}{a}\right) ^{\theta +1}\left(
\dint\limits_{0}^{1}\left( \frac{x}{a}\right) ^{qmt}\left\vert f^{\prime
}\left( x^{mt}a^{m\left( 1-t\right) }\right) \right\vert ^{q}dt\right) ^{%
\frac{1}{q}}\right.  \notag \\
&&\left. +b^{m}\left( \ln \frac{b}{x}\right) ^{\theta +1}\left(
\dint\limits_{0}^{1}\left( \frac{x}{b}\right) ^{qmt}\left\vert f^{\prime
}\left( x^{mt}b^{m\left( 1-t\right) }\right) \right\vert ^{q}dt\right) ^{%
\frac{1}{q}}\right\} .  \label{2.7}
\end{eqnarray}

By a simple computation%
\begin{equation*}
R_{0}\left( \theta ,\lambda ,p\right) =\dint\limits_{0}^{1}\left\vert
t^{\theta }-\lambda \right\vert ^{p}dt
\end{equation*}%
\begin{equation}
=\left\{ 
\begin{array}{ccc}
\frac{1}{\theta p+1} & , & \lambda =0 \\ 
\begin{array}{c}
\left\{ \frac{\lambda ^{\left( \theta p+1\right) /\theta }}{\theta }\beta
\left( \frac{1}{\theta },p+1\right) +\frac{\left( 1-\lambda \right) ^{p+1}}{%
\theta \left( p+1\right) }\right. \\ 
\left. \times 
\begin{array}{c}
_{2}F_{1}\left( 1-\frac{1}{\theta },1;p+2;1-\lambda \right)%
\end{array}%
\right\}%
\end{array}
& , & 0<\lambda <1 \\ 
\frac{1}{\theta }\beta \left( \frac{1}{\theta },p+1\right) & , & \lambda =1%
\end{array}%
\right. ,  \label{2.8}
\end{equation}%
Since $|f^{\prime }|^{q}$ is $\left( \alpha ,m\right) $-GA-convex on $%
[a^{m},b],$ for all $t\in \left[ 0,1\right] $, if we use $\left( \ref{2.3}%
\right) $, $\left( \ref{2.4}\right) $ and $\left( \ref{2.8}\right) $ in $%
\left( \ref{2.7}\right) $, we obtain $\left( \ref{2.6}\right) $. This
completes the proof.
\end{proof}

\begin{corollary}
\bigskip Under the assumptions of Theorem \ref{Theorem6} with $x=\sqrt{ab}$, 
$\lambda =\frac{1}{3}$ from the inequality $\left( \ref{2.6}\right) $ we get
the following Simpson type inequality for fractional integrals%
\begin{eqnarray*}
&&\frac{2^{\theta -1}}{\left( m\ln \frac{b}{a}\right) ^{\theta }}\left\vert
K_{f}\left( \frac{1}{3},\theta ,\left( \sqrt{ab}\right)
^{m},a^{m},b^{m}\right) \right\vert =\left\vert \frac{1}{6}\left[
f(a^{m})+4f\left( \left( \sqrt{ab}\right) ^{m}\right) +f(b^{m})\right]
\right. \\
&&-\frac{2^{\theta -1}\Gamma \left( \theta +1\right) }{\left( m\ln \frac{b}{a%
}\right) ^{\theta }}\left. \left[ J_{\left( \sqrt{ab}\right) ^{m}-}^{\theta
}f(a^{m})+J_{\left( \sqrt{ab}\right) ^{m}+}^{\theta }f(b^{m})\right]
\right\vert \leq \frac{m\ln \frac{b}{a}}{4}R_{0}^{^{\frac{1}{p}}}\left(
\theta ,\frac{1}{3},p\right) \\
&&\times \left\{ a^{m}\left[ 
\begin{array}{c}
\left\vert f^{\prime }\left( \left( \sqrt{ab}\right) ^{m}\right) \right\vert
^{q}R_{1}\left( \sqrt{ab},q,m,\alpha \right) \\ 
+m\left\vert f^{\prime }\left( a\right) \right\vert ^{q}R_{2}\left( \sqrt{ab}%
,q,m,\alpha \right)%
\end{array}%
\right] ^{\frac{1}{q}}\right. \\
&&\left. +b^{m}\left[ 
\begin{array}{c}
\left\vert f^{\prime }\left( \left( \sqrt{ab}\right) ^{m}\right) \right\vert
^{q}R_{3}\left( \sqrt{ab},q,m,\alpha \right) \\ 
+m\left\vert f^{\prime }\left( b\right) \right\vert ^{q}R_{4}\left( \sqrt{ab}%
,q,m,\alpha \right)%
\end{array}%
\right] ^{\frac{1}{q}}\right\}
\end{eqnarray*}
\end{corollary}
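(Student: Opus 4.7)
My plan is to derive this Simpson-type inequality as a direct specialization of Theorem \ref{Theorem6}, so the only work is careful substitution of $x = \sqrt{ab}$ and $\lambda = 1/3$ into inequality (\ref{2.6}) and then rescaling both sides by the multiplicative factor $\dfrac{2^{\theta-1}}{(m \ln\frac{b}{a})^{\theta}}$. No new integral estimates or convexity arguments are required.

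The first step is the geometric identity $\ln\frac{\sqrt{ab}}{a} = \ln\frac{b}{\sqrt{ab}} = \frac{1}{2}\ln\frac{b}{a}$, which in particular gives $\ln^{\theta}\frac{x}{a} = \ln^{\theta}\frac{b}{x} = \frac{1}{2^{\theta}}\ln^{\theta}\frac{b}{a}$ and $\ln^{\theta+1}\frac{x}{a} = \ln^{\theta+1}\frac{b}{x} = \frac{1}{2^{\theta+1}}\ln^{\theta+1}\frac{b}{a}$. The second step is to plug $\lambda = 1/3$ into the definition of $K_f$. Collecting the coefficients of $f((\sqrt{ab})^m)$, $f(a^m)$ and $f(b^m)$ produces
\[
K_f\!\left(\tfrac{1}{3},\theta,(\sqrt{ab})^m,a^m,b^m\right) = \frac{m^{\theta}\ln^{\theta}\frac{b}{a}}{3\cdot 2^{\theta}}\bigl[f(a^m) + 4f((\sqrt{ab})^m) + f(b^m)\bigr] - \Gamma(\theta+1)\bigl[J_{(\sqrt{ab})^m-}^{\theta}f(a^m) + J_{(\sqrt{ab})^m+}^{\theta}f(b^m)\bigr],
\]
because $(1-\lambda)\cdot 2 = 4/3$ while $\lambda = 1/3$, and the two logarithms coincide. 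Multiplying through by $\dfrac{2^{\theta-1}}{(m\ln\frac{b}{a})^{\theta}}$ makes the $m^{\theta}\ln^{\theta}\frac{b}{a}$ factor cancel against the normalization and leaves the Simpson combination $\frac{1}{6}[f(a^m) + 4 f((\sqrt{ab})^m) + f(b^m)]$, which is precisely the left-hand side of the claim.

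For the right-hand side of (\ref{2.6}), the same identities give $a^m(\ln\frac{x}{a})^{\theta+1} = \frac{a^m}{2^{\theta+1}}\ln^{\theta+1}\frac{b}{a}$ and similarly for the $b^m$ term. Multiplying the prefactor $m^{\theta+1}R_0^{1/p}(\theta, 1/3, p)$ by $\dfrac{2^{\theta-1}}{(m\ln\frac{b}{a})^{\theta}}\cdot \frac{1}{2^{\theta+1}}\ln^{\theta+1}\frac{b}{a}$ telescopes to $\dfrac{m\ln\frac{b}{a}}{4}R_0^{1/p}(\theta,1/3,p)$, which agrees with the stated bound. The $R_i(\sqrt{ab}, q, m, \alpha)$ factors then simply inherit the substitution $x = \sqrt{ab}$ from their definitions.

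The only genuine bookkeeping is matching the powers of $2$ in two places: the reduction of $(1-\lambda)m^{\theta}[\ln^{\theta}\frac{x}{a}+\ln^{\theta}\frac{b}{x}]$ to the coefficient $\tfrac{4}{3}\cdot\tfrac{m^{\theta}}{2^{\theta}}\ln^{\theta}\frac{b}{a}$, and the cancellation $2^{\theta-1}/2^{\theta+1} = 1/4$ in the rescaling. I expect this arithmetic to be the only potentially error-prone step; beyond it, the result is an immediate corollary of Theorem \ref{Theorem6}.
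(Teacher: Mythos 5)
Your proposal is correct and is exactly the argument the paper intends: the corollary is obtained by substituting $x=\sqrt{ab}$, $\lambda=\tfrac{1}{3}$ into inequality (\ref{2.6}) and normalizing by $\frac{2^{\theta-1}}{(m\ln\frac{b}{a})^{\theta}}$, using $\ln\frac{\sqrt{ab}}{a}=\ln\frac{b}{\sqrt{ab}}=\frac{1}{2}\ln\frac{b}{a}$ to collapse $K_f$ to the Simpson combination $\frac{1}{6}\left[ f(a^{m})+4f\left( \left( \sqrt{ab}\right)^{m}\right) +f(b^{m})\right]$ and the cancellation $2^{\theta-1}/2^{\theta+1}=\frac{1}{4}$ on the right-hand side. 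Your bookkeeping of the powers of $2$ and of $m$ checks out, so nothing is missing.
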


\begin{corollary}
\label{Corollary7}Under the assumptions of Theorem \ref{Theorem6} with $x=%
\sqrt{ab}$,$\ \lambda =0$ from the inequality $\left( \ref{2.6}\right) $ we
get the following midpoint-type inequality for fractional integrals
\end{corollary}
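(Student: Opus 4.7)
\medskip

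\noindent\textbf{Proof plan for Corollary \ref{Corollary7}.} The statement is an immediate specialization of Theorem \ref{Theorem6}, obtained by setting $x=\sqrt{ab}$ and $\lambda=0$ in inequality $(\ref{2.6})$, so no new analytic idea is needed. My plan is simply to evaluate each of the ingredients explicitly at these parameter values and then normalize the resulting inequality.

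First I would rewrite the left-hand side. At $\lambda=0$ the functional $K_{f}$ collapses to
\begin{equation*}
K_{f}(0,\theta,x^{m},a^{m},b^{m})=m^{\theta}\!\left[\ln^{\theta}\tfrac{x}{a}+\ln^{\theta}\tfrac{b}{x}\right]f(x^{m})-\Gamma(\theta+1)\!\left[J_{x^{m}-}^{\theta}f(a^{m})+J_{x^{m}+}^{\theta}f(b^{m})\right],
\end{equation*}
and at $x=\sqrt{ab}$ we have $\ln\frac{x}{a}=\ln\frac{b}{x}=\frac{1}{2}\ln\frac{b}{a}$, so the bracket equals $2^{1-\theta}(\ln\frac{b}{a})^{\theta}$. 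Multiplying through by $2^{\theta-1}/(m\ln\frac{b}{a})^{\theta}$ isolates $f((\sqrt{ab})^{m})$ and produces the midpoint expression
\begin{equation*}
\left|\,f\!\left((\sqrt{ab})^{m}\right)-\frac{2^{\theta-1}\Gamma(\theta+1)}{(m\ln\frac{b}{a})^{\theta}}\!\left[J_{(\sqrt{ab})^{m}-}^{\theta}f(a^{m})+J_{(\sqrt{ab})^{m}+}^{\theta}f(b^{m})\right]\right|.
\end{equation*}

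Next I would compute the right-hand side of $(\ref{2.6})$ at the same values. Since $R_{0}(\theta,0,p)=\int_{0}^{1}t^{\theta p}\,dt=\frac{1}{\theta p+1}$, the first factor becomes $(\theta p+1)^{-1/p}$. The prefactor $m^{\theta+1}(\ln\frac{x}{a})^{\theta+1}=m^{\theta+1}(\ln\frac{b}{x})^{\theta+1}$ both equal $m^{\theta+1}2^{-(\theta+1)}(\ln\frac{b}{a})^{\theta+1}$; combined with the normalization factor $2^{\theta-1}/(m\ln\frac{b}{a})^{\theta}$ this reduces cleanly to $\frac{m\ln(b/a)}{4}$. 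The inner $R_{i}$ integrals are just evaluated at $x=\sqrt{ab}$, giving $R_{i}(\sqrt{ab},q,m,\alpha)$ for $i=1,2,3,4$. Putting these pieces together yields the asserted inequality
\begin{eqnarray*}
&&\frac{2^{\theta-1}}{(m\ln\frac{b}{a})^{\theta}}\left|K_{f}(0,\theta,(\sqrt{ab})^{m},a^{m},b^{m})\right|\leq\frac{m\ln\frac{b}{a}}{4}\left(\frac{1}{\theta p+1}\right)^{\frac{1}{p}}\\
&&\times\left\{a^{m}\!\left[\!\begin{array}{c}|f'((\sqrt{ab})^{m})|^{q}R_{1}(\sqrt{ab},q,m,\alpha)\\+m|f'(a)|^{q}R_{2}(\sqrt{ab},q,m,\alpha)\end{array}\!\right]^{\frac{1}{q}}\right.\\
&&\left.+b^{m}\!\left[\!\begin{array}{c}|f'((\sqrt{ab})^{m})|^{q}R_{3}(\sqrt{ab},q,m,\alpha)\\+m|f'(b)|^{q}R_{4}(\sqrt{ab},q,m,\alpha)\end{array}\!\right]^{\frac{1}{q}}\right\}.
\end{eqnarray*}

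The only ``obstacle'' is bookkeeping: tracking the powers of $2$, $m$, and $\ln\frac{b}{a}$ that arise from symmetrizing the two logarithmic factors and from the rescaling by $2^{\theta-1}/(m\ln\frac{b}{a})^{\theta}$. Once the identity $R_{0}(\theta,0,p)=1/(\theta p+1)$ is used and the prefactor is reduced to $\frac{m}{4}\ln\frac{b}{a}$, the corollary follows from Theorem \ref{Theorem6} with no further work.
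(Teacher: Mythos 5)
Your proposal is correct and coincides with the paper's own (implicit) derivation: the corollary is obtained exactly by substituting $x=\sqrt{ab}$, $\lambda=0$ into inequality $(\ref{2.6})$, using $R_{0}(\theta,0,p)=\frac{1}{\theta p+1}$ and $\ln\frac{x}{a}=\ln\frac{b}{x}=\frac{1}{2}\ln\frac{b}{a}$, then normalizing by $2^{\theta-1}/(m\ln\frac{b}{a})^{\theta}$. Your bookkeeping of the powers of $2$, $m$, and $\ln\frac{b}{a}$ reproducing the prefactor $\frac{m\ln(b/a)}{4}$ checks out.
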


\begin{eqnarray*}
&&\frac{2^{\theta -1}}{\left( m\ln \frac{b}{a}\right) ^{\theta }}\left\vert
K_{f}\left( 0,\theta ,\left( \sqrt{ab}\right) ^{m},a^{m},b^{m}\right)
\right\vert \\
&=&\left\vert f\left( \left( \sqrt{ab}\right) ^{m}\right) -\frac{2^{\theta
-1}\Gamma \left( \theta +1\right) }{\left( m\ln \frac{b}{a}\right) ^{\theta }%
}\left[ J_{\left( \sqrt{ab}\right) ^{m}-}^{\theta }f(a^{m})+J_{\left( \sqrt{%
ab}\right) ^{m}+}^{\theta }f(b^{m})\right] \right\vert \\
&\leq &\frac{m\ln \frac{b}{a}}{4}\left( \frac{1}{\theta p+1}\right) ^{^{%
\frac{1}{p}}}\left\{ a^{m}\left[ 
\begin{array}{c}
\left\vert f^{\prime }\left( \left( \sqrt{ab}\right) ^{m}\right) \right\vert
^{q}R_{1}\left( \sqrt{ab},q,m,\alpha \right) \\ 
+m\left\vert f^{\prime }\left( a\right) \right\vert ^{q}R_{2}\left( \sqrt{ab}%
,q,m,\alpha \right)%
\end{array}%
\right] ^{\frac{1}{q}}\right. \\
&&\left. +b^{m}\left[ 
\begin{array}{c}
\left\vert f^{\prime }\left( \left( \sqrt{ab}\right) ^{m}\right) \right\vert
^{q}R_{3}\left( \sqrt{ab},q,m,\alpha \right) \\ 
+m\left\vert f^{\prime }\left( b\right) \right\vert ^{q}R_{4}\left( \sqrt{ab}%
,q,m,\alpha \right)%
\end{array}%
\right] ^{\frac{1}{q}}\right\}
\end{eqnarray*}

\begin{remark}
If we take $\theta =1$, $m=1$, $p=\frac{q}{q-1}$ in Corollary \ref%
{Corollary7} we have the following midpoint-type inequality for $\alpha $%
-GA-convex function (or GA-s-convex function in the first sense), which is
the same with the inequality $\left( 17\right) $ of Theorem 3.7.b. in \cite%
{19}, 
\begin{eqnarray*}
&&\left\vert f\left( \sqrt{ab}\right) -\frac{1}{\ln b-\ln a}\int_{a}^{b}%
\frac{f\left( x\right) }{x}dx\right\vert \\
&\leq &\frac{\ln \frac{b}{a}}{4}\left( \frac{q-1}{2q-1}\right) ^{1-\frac{1}{q%
}}\left\{ a\left[ 
\begin{array}{c}
\left\vert f^{\prime }\left( \sqrt{ab}\right) \right\vert ^{q}R_{1}\left( 
\sqrt{ab},q,1,\alpha \right) \\ 
+\left\vert f^{\prime }\left( a\right) \right\vert ^{q}R_{2}\left( \sqrt{ab}%
,q,1,\alpha \right)%
\end{array}%
\right] ^{\frac{1}{q}}\right. \\
&&\left. +b\left[ 
\begin{array}{c}
\left\vert f^{\prime }\left( \sqrt{ab}\right) \right\vert ^{q}R_{3}\left( 
\sqrt{ab},q,1,\alpha \right) \\ 
+\left\vert f^{\prime }\left( b\right) \right\vert ^{q}R_{4}\left( \sqrt{ab}%
,q,1,\alpha \right)%
\end{array}%
\right] ^{\frac{1}{q}}\right\} \text{.}
\end{eqnarray*}
\end{remark}

\begin{remark}
If we take $\theta =1$, $m=1$, $\alpha =1$, $p=\frac{q}{q-1}$ in Corollary %
\ref{Corollary7} we have the following midpoint-type inequality for
GA-convex function, which is the same with the inequality $\left( 21\right) $
of Corollary 3.8 in \cite{19},%
\begin{eqnarray*}
&&\left\vert f\left( \sqrt{ab}\right) -\frac{1}{\ln b-\ln a}\int_{a}^{b}%
\frac{f\left( x\right) }{x}dx\right\vert \\
&\leq &\frac{\ln \frac{b}{a}}{4}\left( \frac{q-1}{2q-1}\right) ^{1-\frac{1}{q%
}}\left\{ a\left[ 
\begin{array}{c}
\left\vert f^{\prime }\left( \sqrt{ab}\right) \right\vert ^{q}R_{1}\left( 
\sqrt{ab},q,1,1\right) \\ 
+\left\vert f^{\prime }\left( a\right) \right\vert ^{q}R_{2}\left( \sqrt{ab}%
,q,1,1\right)%
\end{array}%
\right] ^{\frac{1}{q}}\right. \\
&&\left. +b\left[ 
\begin{array}{c}
\left\vert f^{\prime }\left( \sqrt{ab}\right) \right\vert ^{q}R_{3}\left( 
\sqrt{ab},q,1,1\right) \\ 
+\left\vert f^{\prime }\left( b\right) \right\vert ^{q}R_{4}\left( \sqrt{ab}%
,q,1,1\right)%
\end{array}%
\right] ^{\frac{1}{q}}\right\} \text{..}
\end{eqnarray*}
\end{remark}

\begin{corollary}
Under the assumptions of Theorem \ref{Theorem6} with$\ x=\sqrt{ab}$, $%
\lambda =1$ from the inequality $\left( \ref{2.6}\right) $ we get the
following trepezoid-type inequality for fractional integrals 
\begin{eqnarray*}
&&\frac{2^{\theta -1}}{\left( m\ln \frac{b}{a}\right) ^{\theta }}\left\vert
K_{f}\left( 1,\theta ,\left( \sqrt{ab}\right) ^{m},a^{m},b^{m}\right)
\right\vert \\
&=&\left\vert \frac{f(a^{m})+f(b^{m})}{2}-\frac{2^{\theta -1}\Gamma \left(
\theta +1\right) }{\left( m\ln \frac{b}{a}\right) ^{\theta }}\left[
J_{\left( \sqrt{ab}\right) ^{m}-}^{\theta }f(a^{m})+J_{\left( \sqrt{ab}%
\right) ^{m}+}^{\theta }f(b^{m})\right] \right\vert \\
&\leq &\frac{m\ln \frac{b}{a}}{4}\left( \frac{1}{\theta }\beta \left( \frac{1%
}{\theta },p+1\right) \right) ^{\frac{1}{p}}\left\{ a^{m}\left[ 
\begin{array}{c}
\left\vert f^{\prime }\left( \left( \sqrt{ab}\right) ^{m}\right) \right\vert
^{q}R_{1}\left( \sqrt{ab},q,m,\alpha \right) \\ 
+m\left\vert f^{\prime }\left( a\right) \right\vert ^{q}R_{2}\left( \sqrt{ab}%
,q,m,\alpha \right)%
\end{array}%
\right] ^{\frac{1}{q}}\right. \\
&&\left. +b^{m}\left( 
\begin{array}{c}
\left\vert f^{\prime }\left( x^{m}\right) \right\vert ^{q}R_{3}\left( \sqrt{%
ab},q,m,\alpha \right) \\ 
+m\left\vert f^{\prime }\left( b\right) \right\vert ^{q}R_{4}\left( \sqrt{ab}%
,q,m,\alpha \right)%
\end{array}%
\right) ^{\frac{1}{q}}\right\}
\end{eqnarray*}
\end{corollary}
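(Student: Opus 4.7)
The plan is to obtain this corollary as a direct specialization of Theorem \ref{Theorem6}, substituting $x=\sqrt{ab}$ and $\lambda=1$ into inequality (\ref{2.6}) and simplifying both sides. Conceptually there is nothing new to prove; the work is purely computational, so I would organize it as a short sequence of reductions that match the left and right of the claimed display to the corresponding pieces of (\ref{2.6}).

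First I would handle the left-hand side. Substituting $x=\sqrt{ab}$ gives $\ln(x/a)=\ln(b/x)=\tfrac{1}{2}\ln(b/a)$, so $\ln^{\theta}(x/a)=\ln^{\theta}(b/x)=2^{-\theta}\ln^{\theta}(b/a)$. With $\lambda=1$ the $(1-\lambda)$-term in the definition of $K_{f}$ vanishes, leaving
\begin{equation*}
K_{f}\!\left(1,\theta,(\sqrt{ab})^{m},a^{m},b^{m}\right)=\frac{m^{\theta}\ln^{\theta}(b/a)}{2^{\theta}}\bigl[f(a^{m})+f(b^{m})\bigr]-\Gamma(\theta+1)\bigl[J_{(\sqrt{ab})^{m}-}^{\theta}f(a^{m})+J_{(\sqrt{ab})^{m}+}^{\theta}f(b^{m})\bigr].
\end{equation*}
Multiplying by $2^{\theta-1}/(m\ln(b/a))^{\theta}$ then collapses the first bracket to $\tfrac12[f(a^{m})+f(b^{m})]$ and yields exactly the quantity inside the absolute value on the left of the stated inequality.

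Next I would simplify the right-hand side. The prefactor $m^{\theta+1}$ combined with $(\ln(x/a))^{\theta+1}=(\ln(b/x))^{\theta+1}=2^{-(\theta+1)}\ln^{\theta+1}(b/a)$ and the rescaling $2^{\theta-1}/(m\ln(b/a))^{\theta}$ consolidates into the factor $\tfrac{m\ln(b/a)}{4}$ appearing in the corollary. The $R_{0}$-factor then specializes using the $\lambda=1$ case already recorded in the definition, namely $R_{0}(\theta,1,p)=\tfrac{1}{\theta}\beta\bigl(\tfrac{1}{\theta},p+1\bigr)$, and the $R_{i}$-integrals become $R_{i}(\sqrt{ab},q,m,\alpha)$ because $x=\sqrt{ab}$ is substituted directly into their defining integrals. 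Matching the two sums term by term completes the identification with the claimed bound.

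The only potential pitfalls are bookkeeping: keeping track of the exponents $\theta$ and $\theta+1$ in $\ln$ and in $2$, and correctly carrying the $R_{0}^{1/p}$ factor through the $\lambda=1$ branch of its piecewise definition. There is no analytic obstacle since both the power-mean/H\"older step and the $(\alpha,m)$-GA-convexity step were already executed in the proof of Theorem \ref{Theorem6}; this corollary requires only substitution and algebraic simplification.
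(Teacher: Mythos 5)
Your proposal is correct and coincides with the paper's (implicit) proof, which offers no separate argument for this corollary beyond exactly the substitution you carry out: setting $x=\sqrt{ab}$, $\lambda=1$ in $\left(\ref{2.6}\right)$, using $\ln\frac{x}{a}=\ln\frac{b}{x}=\frac{1}{2}\ln\frac{b}{a}$ to collapse the prefactors to $\frac{m\ln\frac{b}{a}}{4}$ after rescaling by $\frac{2^{\theta-1}}{\left(m\ln\frac{b}{a}\right)^{\theta}}$, and invoking the $\lambda=1$ branch $R_{0}\left(\theta,1,p\right)=\frac{1}{\theta}\beta\left(\frac{1}{\theta},p+1\right)$ together with the $\lambda$-independence of the $R_{i}$. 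The only point worth adding is that the factor $\left\vert f^{\prime}\left(x^{m}\right)\right\vert^{q}$ in the last bracket of the stated corollary is a typographical slip for $\left\vert f^{\prime}\left(\left(\sqrt{ab}\right)^{m}\right)\right\vert^{q}$, which your term-by-term matching silently corrects.
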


\begin{corollary}
Let the assumptions of Theorem \ref{Theorem6} hold. If $\ \left\vert
f^{\prime }(u)\right\vert \leq M$ for all $u\in \left[ a^{m},b\right] $ and $%
\lambda =0,$ then from the inequality $\left( \ref{2.6}\right) $ we get the
following Ostrowski type inequality for fractional integrals%
\begin{eqnarray*}
&&\left\vert \left[ \left( \ln \frac{x}{a}\right) ^{\theta }+\left( \ln 
\frac{b}{x}\right) ^{\theta }\right] f(x^{m})-\frac{\Gamma \left( \theta
+1\right) }{m^{\theta }}\left[ J_{x^{m}-}^{\theta
}f(a^{m})+J_{x^{m}+}^{\theta }f(b^{m})\right] \right\vert \\
&\leq &\frac{mM}{\left( \theta p+1\right) ^{\frac{1}{p}}}\left\{ a^{m}\left(
\ln \frac{x}{a}\right) ^{\theta +1}\left( 
\begin{array}{c}
R_{1}\left( x,q,m,\alpha \right) \\ 
+mR_{2}\left( x,q,m,\alpha \right)%
\end{array}%
\right) ^{\frac{1}{q}}\right. \\
&&\left. +b^{m}\left( \ln \frac{b}{x}\right) ^{\theta +1}\left( 
\begin{array}{c}
R_{3}\left( x,q,m,\alpha \right) \\ 
+R_{4}\left( x,q,m,\alpha \right)%
\end{array}%
\right) ^{\frac{1}{q}}\right\}
\end{eqnarray*}%
for each $x\in \left[ a,b\right] $
\end{corollary}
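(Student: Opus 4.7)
The plan is to specialize Theorem \ref{Theorem6} at $\lambda = 0$, evaluate $R_0(\theta,0,p)$ explicitly, and use the uniform bound $|f'| \leq M$ to absorb the derivative terms on the right-hand side.

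First, I would simplify $K_f(0, \theta, x^m, a^m, b^m)$ directly from its definition. At $\lambda = 0$ the two terms carrying the coefficient $\lambda$ vanish and the coefficient $(1-\lambda)$ becomes $1$, so
\[
K_f(0, \theta, x^m, a^m, b^m) = m^\theta \Bigl[\ln^\theta \tfrac{x}{a} + \ln^\theta \tfrac{b}{x}\Bigr] f(x^m) - \Gamma(\theta+1) \Bigl[J_{x^m-}^\theta f(a^m) + J_{x^m+}^\theta f(b^m)\Bigr].
\]
Dividing through by $m^\theta$ produces exactly the expression inside the absolute value on the left-hand side of the claim.

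Second, I would read off $R_0(\theta,0,p)$ from the piecewise formula given in Theorem \ref{Theorem6}: the $\lambda = 0$ branch gives $R_0(\theta,0,p) = 1/(\theta p + 1)$, so that $R_0^{1/p}(\theta,0,p) = (\theta p+1)^{-1/p}$, which supplies the scalar prefactor on the right-hand side.

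Third, I would invoke the bound $|f'(u)| \leq M$ on $[a^m, b]$, applied at the three evaluation points $x^m$, $a$ and $b$ appearing in (\ref{2.6}). Replacing each of $|f'(x^m)|^q$, $|f'(a)|^q$ and $|f'(b)|^q$ by $M^q$, then extracting the common factor $M$ out of each of the two $1/q$-power brackets and combining the remaining powers of $m$ (the $m^{\theta+1}$ in front of the brace in (\ref{2.6}) becomes $m$ after the division by $m^\theta$), yields the stated inequality. The argument is purely a direct specialization of Theorem \ref{Theorem6}, so the main (minor) obstacle is the careful bookkeeping of the powers of $m$ and the clean extraction of $M$ out of the exponent-$1/q$ brackets — there is no new estimation to perform.
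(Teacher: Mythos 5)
Your proposal is correct and takes essentially the same route as the paper, which states this corollary without separate proof as the direct specialization of Theorem \ref{Theorem6} to $\lambda =0$ (using the $\lambda =0$ branch $R_{0}\left( \theta ,0,p\right) =\frac{1}{\theta p+1}$ of $\left( \ref{2.8}\right) $) together with the uniform bound $\left\vert f^{\prime }\right\vert \leq M$, after dividing $K_{f}$ by $m^{\theta }$ exactly as you do. The only discrepancy is that your (correct) bookkeeping produces $\left( R_{3}+mR_{4}\right) ^{\frac{1}{q}}$ in the second bracket, whereas the printed corollary has $\left( R_{3}+R_{4}\right) ^{\frac{1}{q}}$ with the factor $m$ dropped (apparently a typo, mirroring the $+mR_{2}$ in the first bracket); since $m\in \left( 0,1\right] $ your bound is sharper and implies the stated one.
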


\begin{theorem}
\bigskip \label{Theorem7}Let $f:I\subseteq \left( 0,\infty \right)
\rightarrow 
\mathbb{R}
$ be a differentiable function on $I^{\circ }$ such that $f^{\prime }\in
L[a^{m},b^{m}]$, where $a^{m},b\in I$ $^{\circ }$ with $a<b$ and $m\in
\left( 0,1\right] $. If $|f^{\prime }|^{q}$ is $\left( \alpha ,m\right) $%
-GA-convex on $[a^{m},b]$ for some fixed $q>1$, $x\in \lbrack a,b]$, $%
\lambda \in \left[ 0,1\right] $ and $\theta >0$ then the following
inequality for fractional integrals holds%
\begin{eqnarray}
&&\left\vert K_{f}\left( \lambda ,\theta ,x^{m},a^{m},b^{m}\right)
\right\vert \leq m^{\theta +1}  \notag \\
&&\times \left\{ a^{m}\left( \ln \frac{x}{a}\right) ^{\theta +1}T_{1}^{\frac{%
1}{p}}\left( x,\theta ,\lambda ,p,m\right) \left( \frac{\left\vert f^{\prime
}\left( x^{m}\right) \right\vert ^{q}+m\alpha \left\vert f^{\prime }\left(
a\right) \right\vert ^{q}}{\alpha +1}\right) ^{\frac{1}{q}}\right.  \notag \\
&&\left. +b^{m}\left( \ln \frac{b}{x}\right) ^{\theta +1}T_{2}^{\frac{1}{p}%
}\left( x,\theta ,\lambda ,p,m\right) \left( \frac{\left\vert f^{\prime
}\left( x^{m}\right) \right\vert ^{q}+m\alpha \left\vert f^{\prime }\left(
b\right) \right\vert ^{q}}{\alpha +1}\right) ^{\frac{1}{q}}\right\}
\label{2.9}
\end{eqnarray}%
where%
\begin{eqnarray*}
T_{1}\left( x,\theta ,\lambda ,p,m\right) &=&\dint\limits_{0}^{1}\left\vert
t^{\theta }-\lambda \right\vert ^{p}\left( \frac{x}{a}\right) ^{mpt}dt, \\
T_{2}\left( x,\theta ,\lambda ,p,m\right) &=&\dint\limits_{0}^{1}\left\vert
t^{\theta }-\lambda \right\vert ^{p}\left( \frac{x}{b}\right) ^{mpt}dt,
\end{eqnarray*}%
and $\frac{1}{p}+\frac{1}{q}=1$.
\end{theorem}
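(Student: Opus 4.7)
The plan is to start from Lemma \ref{Lemma2} exactly as in the proofs of Theorems \ref{Theorem5} and \ref{Theorem6}, but to split the factors differently when H\"older's inequality is applied. Writing the identity of Lemma \ref{Lemma2} and using the triangle inequality gives
\begin{equation*}
\left\vert K_{f}(\lambda ,\theta ,x^{m},a^{m},b^{m})\right\vert \leq m^{\theta +1}\left\{ a^{m}\left( \ln \tfrac{x}{a}\right) ^{\theta +1}A+b^{m}\left( \ln \tfrac{b}{x}\right) ^{\theta +1}B\right\},
\end{equation*}
where $A=\int_{0}^{1}\left\vert t^{\theta }-\lambda \right\vert (x/a)^{mt}\left\vert f^{\prime }(x^{mt}a^{m(1-t)})\right\vert dt$ and $B$ is the analogous integral with $a$ replaced by $b$.

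Next, for each of the integrals $A$ and $B$ I would apply H\"older's inequality with exponents $p,q$ satisfying $\tfrac{1}{p}+\tfrac{1}{q}=1$, but in contrast to the proof of Theorem \ref{Theorem6}, I pair $\left\vert t^{\theta }-\lambda \right\vert (x/a)^{mt}$ (raised to the $p$-th power) against $\left\vert f^{\prime }(x^{mt}a^{m(1-t)})\right\vert ^{q}$. This yields the factors
\begin{equation*}
A\leq \left( \int_{0}^{1}\left\vert t^{\theta }-\lambda \right\vert ^{p}(x/a)^{mpt}dt\right) ^{1/p}\left( \int_{0}^{1}\left\vert f^{\prime }(x^{mt}a^{m(1-t)})\right\vert ^{q}dt\right) ^{1/q},
\end{equation*}
and similarly for $B$. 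The first factor is precisely $T_{1}^{1/p}(x,\theta ,\lambda ,p,m)$, resp.\ $T_{2}^{1/p}(x,\theta ,\lambda ,p,m)$, matching the notation introduced in the statement.

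For the second factor I would invoke the $(\alpha ,m)$-GA-convexity of $|f^{\prime }|^{q}$ via inequalities (\ref{2.3}) and (\ref{2.4}), then integrate term by term using the two elementary identities $\int_{0}^{1}t^{\alpha }dt=\frac{1}{\alpha +1}$ and $\int_{0}^{1}(1-t^{\alpha })dt=\frac{\alpha }{\alpha +1}$. These collapse the integral of the $(\alpha ,m)$-GA-convex bound into
\begin{equation*}
\int_{0}^{1}\left\vert f^{\prime }(x^{mt}a^{m(1-t)})\right\vert ^{q}dt\leq \frac{\left\vert f^{\prime }(x^{m})\right\vert ^{q}+m\alpha \left\vert f^{\prime }(a)\right\vert ^{q}}{\alpha +1},
\end{equation*}
and likewise with $a$ replaced by $b$. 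Substituting these into the H\"older bound for $A$ and $B$ and combining the two pieces produces exactly (\ref{2.9}).

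The only non-routine step is the choice of how to split the integrand before applying H\"older; once $\left\vert t^{\theta }-\lambda \right\vert (x/a)^{mt}$ is kept together in the $L^{p}$ factor the remaining computation is a direct imitation of the argument in Theorem \ref{Theorem5}. Note in particular that no closed-form evaluation of $T_{1}$ and $T_{2}$ is needed here, since they appear as the final answer; this is the reason the result is stated without a case analysis on $\lambda$, unlike the expression for $R_{0}$ in Theorem \ref{Theorem6}.
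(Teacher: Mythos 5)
Your proposal is correct and follows essentially the same route as the paper: the paper likewise applies H\"older's inequality with the factor $\left\vert t^{\theta}-\lambda\right\vert\left(\frac{x}{a}\right)^{mt}$ kept intact in the $L^{p}$ term (yielding $T_{1}^{1/p}$, $T_{2}^{1/p}$) and bounds $\int_{0}^{1}\left\vert f^{\prime}\left(x^{mt}a^{m(1-t)}\right)\right\vert^{q}dt$ by $\frac{\left\vert f^{\prime}(x^{m})\right\vert^{q}+m\alpha\left\vert f^{\prime}(a)\right\vert^{q}}{\alpha+1}$ using $(\alpha,m)$-GA-convexity, exactly as you do. The only difference is the order of presentation (the paper records the convexity estimates (\ref{2.10})--(\ref{2.11}) before invoking H\"older), which is immaterial.
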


\begin{proof}
Since $|f^{\prime }|^{q}$ is $\left( \alpha ,m\right) $-GA-convex on $%
[a^{m},b],$ for all $t\in \left[ 0,1\right] $, if we use $\left( \ref{2.3}%
\right) $, $\left( \ref{2.4}\right) $%
\begin{eqnarray}
\dint\limits_{0}^{1}\left\vert f^{\prime }\left( x^{mt}a^{m\left( 1-t\right)
}\right) \right\vert ^{q}dt &\leq &\dint\limits_{0}^{1}t^{\alpha }\left\vert
f^{\prime }\left( x^{m}\right) \right\vert ^{q}+m\left( 1-t^{\alpha }\right)
\left\vert f^{\prime }\left( a\right) \right\vert ^{q}dt  \notag \\
&=&\frac{\left\vert f^{\prime }\left( x^{m}\right) \right\vert ^{q}+m\alpha
\left\vert f^{\prime }\left( a\right) \right\vert ^{q}}{\alpha +1},
\label{2.10}
\end{eqnarray}%
\begin{eqnarray}
\dint\limits_{0}^{1}\left\vert f^{\prime }\left( x^{mt}b^{m\left( 1-t\right)
}\right) \right\vert ^{q}dt &\leq &\dint\limits_{0}^{1}t^{\alpha }\left\vert
f^{\prime }\left( x^{m}\right) \right\vert ^{q}+m\left( 1-t^{\alpha }\right)
\left\vert f^{\prime }\left( b\right) \right\vert ^{q}dt  \notag \\
&=&\frac{\left\vert f^{\prime }\left( x^{m}\right) \right\vert ^{q}+m\alpha
\left\vert f^{\prime }\left( b\right) \right\vert ^{q}}{\alpha +1}.
\label{2.11}
\end{eqnarray}

From Lemma \ref{Lemma2}, property of the modulus, $\left( \ref{2.10}\right) $%
, $\left( \ref{2.11}\right) $ and using the H\"{o}lder inequality, we have%
\begin{eqnarray*}
&&\left\vert K_{f}\left( \lambda ,\theta ,x^{m},a^{m},b^{m}\right)
\right\vert \leq m^{\theta +1} \\
&&\times \left\{ a^{m}\left( \ln \frac{x}{a}\right) ^{\theta +1}\left(
\dint\limits_{0}^{1}\left\vert t^{\theta }-\lambda \right\vert ^{p}\left( 
\frac{x}{a}\right) ^{mpt}dt\right) ^{\frac{1}{p}}\left(
\dint\limits_{0}^{1}\left\vert f^{\prime }\left( x^{mt}a^{m\left( 1-t\right)
}\right) \right\vert ^{q}dt\right) ^{\frac{1}{q}}\right. \\
&&\left. +b^{m}\left( \ln \frac{b}{x}\right) ^{\theta +1}\left(
\dint\limits_{0}^{1}\left\vert t^{\theta }-\lambda \right\vert ^{p}\left( 
\frac{x}{b}\right) ^{mpt}dt\right) ^{\frac{1}{p}}\left(
\dint\limits_{0}^{1}\left\vert f^{\prime }\left( x^{mt}b^{m\left( 1-t\right)
}\right) \right\vert ^{q}dt\right) ^{\frac{1}{q}}\right\} \\
&\leq &m^{\theta +1}\left\{ a^{m}\left( \ln \frac{x}{a}\right) ^{\theta
+1}\left( \dint\limits_{0}^{1}\left\vert t^{\theta }-\lambda \right\vert
^{p}\left( \frac{x}{a}\right) ^{mpt}dt\right) ^{\frac{1}{p}}\left( \frac{%
\left\vert f^{\prime }\left( x^{m}\right) \right\vert ^{q}+m\alpha
\left\vert f^{\prime }\left( a\right) \right\vert ^{q}}{\alpha +1}\right) ^{%
\frac{1}{q}}\right. \\
&&\left. +b^{m}\left( \ln \frac{b}{x}\right) ^{\theta +1}\left(
\dint\limits_{0}^{1}\left\vert t^{\theta }-\lambda \right\vert ^{p}\left( 
\frac{x}{b}\right) ^{mpt}dt\right) ^{\frac{1}{p}}\left( \frac{\left\vert
f^{\prime }\left( x^{m}\right) \right\vert ^{q}+m\alpha \left\vert f^{\prime
}\left( b\right) \right\vert ^{q}}{\alpha +1}\right) ^{\frac{1}{q}}\right\}
\end{eqnarray*}

This completes the proof.
\end{proof}

\begin{corollary}
\bigskip Under the assumptions of Theorem \ref{Theorem7} with $x=\sqrt{ab}$, 
$\lambda =\frac{1}{3}$ from the inequality $\left( \ref{2.9}\right) $ we get
the following Simpson type inequality for fractional integrals%
\begin{eqnarray*}
&&\frac{2^{\theta -1}}{\left( m\ln \frac{b}{a}\right) ^{\theta }}\left\vert
K_{f}\left( \frac{1}{3},\theta ,\left( \sqrt{ab}\right)
^{m},a^{m},b^{m}\right) \right\vert =\left\vert \frac{1}{6}\left[
f(a^{m})+4f\left( \left( \sqrt{ab}\right) ^{m}\right) +f(b^{m})\right]
\right. \\
&&-\frac{2^{\theta -1}\Gamma \left( \theta +1\right) }{\left( m\ln \frac{b}{a%
}\right) ^{\theta }}\left. \left[ J_{\left( \sqrt{ab}\right) ^{m}-}^{\theta
}f(a^{m})+J_{\left( \sqrt{ab}\right) ^{m}+}^{\theta }f(b^{m})\right]
\right\vert \leq \frac{m\ln \frac{b}{a}}{4} \\
&&\times \left\{ a^{m}T_{1}^{\frac{1}{p}}\left( \sqrt{ab},\theta ,\frac{1}{3}%
,p,m\right) \left( \frac{\left\vert f^{\prime }\left( \left( \sqrt{ab}%
\right) ^{m}\right) \right\vert ^{q}+m\alpha \left\vert f^{\prime }\left(
a\right) \right\vert ^{q}}{\alpha +1}\right) ^{\frac{1}{q}}\right. \\
&&\left. +b^{m}T_{2}^{\frac{1}{p}}\left( \sqrt{ab},\theta ,\frac{1}{3}%
,p,m\right) \left( \frac{\left\vert f^{\prime }\left( \left( \sqrt{ab}%
\right) ^{m}\right) \right\vert ^{q}+m\alpha \left\vert f^{\prime }\left(
b\right) \right\vert ^{q}}{\alpha +1}\right) ^{\frac{1}{q}}\right\}
\end{eqnarray*}
\end{corollary}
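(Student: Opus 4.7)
The plan is to derive this corollary as a direct specialization of Theorem~\ref{Theorem7}, obtained by substituting $x=\sqrt{ab}$ and $\lambda=1/3$ into the inequality (\ref{2.9}) and then rescaling both sides by the external factor $2^{\theta-1}/(m\ln(b/a))^{\theta}$ that appears on the left of the statement.

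First I would record the elementary identities at $x=\sqrt{ab}$, namely $\ln(x/a)=\ln(b/x)=\tfrac{1}{2}\ln(b/a)$, from which $\ln^{\theta}(x/a)+\ln^{\theta}(b/x)=2^{1-\theta}\ln^{\theta}(b/a)$ and $(\ln(x/a))^{\theta+1}=(\ln(b/x))^{\theta+1}=2^{-\theta-1}\ln^{\theta+1}(b/a)$. Using the definition of $K_{f}$, the $(1-\lambda)$-term of $K_f(1/3,\theta,(\sqrt{ab})^{m},a^{m},b^{m})$ equals $\tfrac{2}{3}m^{\theta}\cdot 2^{1-\theta}\ln^{\theta}(b/a)\cdot f((\sqrt{ab})^{m})$ and the $\lambda$-term equals $\tfrac{1}{3}m^{\theta}\cdot 2^{-\theta}\ln^{\theta}(b/a)\cdot[f(a^{m})+f(b^{m})]$. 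Multiplying by $2^{\theta-1}/(m\ln(b/a))^{\theta}$ collapses these to $\tfrac{2}{3}f((\sqrt{ab})^{m})$ and $\tfrac{1}{6}[f(a^{m})+f(b^{m})]$, which combine into the Simpson weighted average $\tfrac{1}{6}[f(a^{m})+4f((\sqrt{ab})^{m})+f(b^{m})]$; the fractional-integral term of $K_{f}$ inherits the prefactor $2^{\theta-1}\Gamma(\theta+1)/(m\ln(b/a))^{\theta}$, matching the equality on the left-hand side of the claim.

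For the right-hand side of (\ref{2.9}), I would pull the common factor $2^{-\theta-1}\ln^{\theta+1}(b/a)$ out of both bracketed terms, combine it with the prefactor $m^{\theta+1}$ and the external rescaling, and simplify via $\frac{2^{\theta-1}}{(m\ln(b/a))^{\theta}}\cdot m^{\theta+1}\cdot\frac{\ln^{\theta+1}(b/a)}{2^{\theta+1}}=\frac{m\ln(b/a)}{4}$, which is the normalizing constant appearing in the stated bound. Substituting $(\sqrt{ab},\theta,1/3,p,m)$ into $T_{1}$ and $T_{2}$ and leaving the $q$-averages of $|f'|^{q}$ untouched then reproduces the two bracketed quantities exactly as claimed.

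Since the argument is a straightforward specialization rather than new analysis, the only real obstacle is careful bookkeeping of the various powers of $2$ and of $\ln(b/a)$; verifying that the factors $2^{\theta-1},2^{-\theta},2^{1-\theta},2^{-\theta-1}$ combine to yield the clean constant $m\ln(b/a)/4$ on the right and the Simpson weights $1/6,4/6,1/6$ on the left is the one step where an exponent slip could occur.
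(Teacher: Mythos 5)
Your proposal is correct and follows exactly the route the paper intends: the corollary is stated as a direct substitution of $x=\sqrt{ab}$, $\lambda=\tfrac{1}{3}$ into inequality (\ref{2.9}), and your bookkeeping of the powers of $2$ and $\ln\frac{b}{a}$ (in particular $2^{\theta-1}\cdot 2^{1-\theta}=1$, $2^{\theta-1}\cdot 2^{-\theta}=\tfrac12$, and the collapse of the prefactors to $\tfrac{m\ln (b/a)}{4}$) checks out. The paper gives no separate proof beyond this specialization, so your write-up simply makes explicit the computation the authors left implicit.
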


\begin{corollary}
Under the assumptions of Theorem \ref{Theorem7} with $x=\sqrt{ab}$,$\
\lambda =0$ from the inequality $\left( \ref{2.9}\right) $ we get the
following midpoint-type inequality for fractional integrals
\end{corollary}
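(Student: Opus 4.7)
The plan is to specialize inequality (\ref{2.9}) of Theorem \ref{Theorem7} at $x=\sqrt{ab}$ and $\lambda=0$, then normalize both sides of the resulting estimate by the common factor $2^{\theta-1}/(m\ln(b/a))^{\theta}$ so that the left-hand side appears in the midpoint form already familiar from Corollary \ref{Corollary3} and Corollary \ref{Corollary7}, namely
\[
\left\vert f\bigl((\sqrt{ab})^{m}\bigr) - \frac{2^{\theta-1}\Gamma(\theta+1)}{(m\ln\tfrac{b}{a})^{\theta}}\bigl[J^{\theta}_{(\sqrt{ab})^{m}-}f(a^{m}) + J^{\theta}_{(\sqrt{ab})^{m}+}f(b^{m})\bigr]\right\vert.
\]

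The key simplifications are the following. Since $\sqrt{ab}/a = b/\sqrt{ab} = \sqrt{b/a}$, both $\ln(x/a)$ and $\ln(b/x)$ collapse to $\tfrac{1}{2}\ln(b/a)$, so each of the factors $(\ln\tfrac{x}{a})^{\theta+1}$ and $(\ln\tfrac{b}{x})^{\theta+1}$ equals $\bigl(\tfrac{1}{2}\ln(b/a)\bigr)^{\theta+1}$, and the bracket $\ln^{\theta}(x/a)+\ln^{\theta}(b/x)$ in the definition of $K_{f}$ collapses to $2^{1-\theta}\ln^{\theta}(b/a)$. Setting $\lambda=0$ annihilates the second term of $K_{f}$ and reduces the weight $|t^{\theta}-\lambda|^{p}$ to $t^{\theta p}$, so that $T_{1}(\sqrt{ab},\theta,0,p,m)$ and $T_{2}(\sqrt{ab},\theta,0,p,m)$ are simply the explicit integrals one obtains from their definitions with these specializations.

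The bookkeeping then proceeds as follows. Combining the factor $m^{\theta+1}\bigl(\tfrac{1}{2}\ln(b/a)\bigr)^{\theta+1}$ coming out of each summand on the right of (\ref{2.9}) with the external normalizer $\frac{2^{\theta-1}}{(m\ln(b/a))^{\theta}}$ produces the clean coefficient $\frac{m\ln(b/a)}{4}$ that is standard in the paper's midpoint corollaries. Meanwhile the $\bigl(\tfrac{|f'(x^{m})|^{q}+m\alpha|f'(a)|^{q}}{\alpha+1}\bigr)^{1/q}$ and the analogous $b$-term survive unchanged, since neither involves $\lambda$ or $x$ through the logarithms.

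There is essentially no obstacle beyond this bookkeeping: the result is a direct substitution into Theorem \ref{Theorem7} and requires no new estimate or convexity argument. The only subtle point to watch is that the factors $2^{\theta}$ and $m^{\theta}$ must balance correctly between the external normalizer and the powers of $\ln(b/a)$ hidden inside $(\ln\tfrac{x}{a})^{\theta+1}$ and $(\ln\tfrac{b}{x})^{\theta+1}$; once this is verified, the stated midpoint-type inequality follows at once, in exact parallel to how Corollary \ref{Corollary7} is obtained from Theorem \ref{Theorem6}.
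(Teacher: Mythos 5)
Your proposal is correct and follows exactly the route the paper intends: direct substitution of $x=\sqrt{ab}$, $\lambda=0$ into inequality (\ref{2.9}), with the normalization by $2^{\theta-1}/(m\ln\tfrac{b}{a})^{\theta}$ turning the left side into the midpoint expression and the factor $m^{\theta+1}\bigl(\tfrac{1}{2}\ln\tfrac{b}{a}\bigr)^{\theta+1}$ into the coefficient $\tfrac{m\ln(b/a)}{4}$. All the bookkeeping you describe (the collapse of both logarithms to $\tfrac{1}{2}\ln\tfrac{b}{a}$, the bracket in $K_f$ becoming $2^{1-\theta}\ln^{\theta}\tfrac{b}{a}$, and the survival of the $\alpha$-weighted terms) checks out against the stated corollary.
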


\begin{eqnarray*}
&&\frac{2^{\theta -1}}{\left( m\ln \frac{b}{a}\right) ^{\theta }}\left\vert
K_{f}\left( 0,\theta ,\left( \sqrt{ab}\right) ^{m},a^{m},b^{m}\right)
\right\vert \\
&=&\left\vert f\left( \left( \sqrt{ab}\right) ^{m}\right) -\frac{2^{\theta
-1}\Gamma \left( \theta +1\right) }{\left( m\ln \frac{b}{a}\right) ^{\theta }%
}\left[ J_{\left( \sqrt{ab}\right) ^{m}-}^{\theta }f(a^{m})+J_{\left( \sqrt{%
ab}\right) ^{m}+}^{\theta }f(b^{m})\right] \right\vert \\
&\leq &\frac{m\ln \frac{b}{a}}{4}\left\{ a^{m}T_{1}^{\frac{1}{p}}\left( 
\sqrt{ab},\theta ,0,p,m\right) \left( \frac{\left\vert f^{\prime }\left(
\left( \sqrt{ab}\right) ^{m}\right) \right\vert ^{q}+m\alpha \left\vert
f^{\prime }\left( a\right) \right\vert ^{q}}{\alpha +1}\right) ^{\frac{1}{q}%
}\right. \\
&&\left. +b^{m}T_{2}^{\frac{1}{p}}\left( \sqrt{ab},\theta ,0,p,m\right)
\left( \frac{\left\vert f^{\prime }\left( \left( \sqrt{ab}\right)
^{m}\right) \right\vert ^{q}+m\alpha \left\vert f^{\prime }\left( b\right)
\right\vert ^{q}}{\alpha +1}\right) ^{\frac{1}{q}}\right\}
\end{eqnarray*}

\begin{corollary}
Under the assumptions of Theorem \ref{Theorem7} with$\ x=\sqrt{ab}$, $%
\lambda =1$ from the inequality $\left( \ref{2.9}\right) $ we get the
following trepezoid-type inequality for fractional integrals 
\begin{eqnarray*}
&&\frac{2^{\theta -1}}{\left( m\ln \frac{b}{a}\right) ^{\theta }}\left\vert
K_{f}\left( 1,\theta ,\left( \sqrt{ab}\right) ^{m},a^{m},b^{m}\right)
\right\vert \\
&=&\left\vert \frac{f(a^{m})+f(b^{m})}{2}-\frac{2^{\theta -1}\Gamma \left(
\theta +1\right) }{\left( m\ln \frac{b}{a}\right) ^{\theta }}\left[
J_{\left( \sqrt{ab}\right) ^{m}-}^{\theta }f(a^{m})+J_{\left( \sqrt{ab}%
\right) ^{m}+}^{\theta }f(b^{m})\right] \right\vert \\
&\leq &\frac{m\ln \frac{b}{a}}{4}\left\{ a^{m}T_{1}^{\frac{1}{p}}\left( 
\sqrt{ab},\theta ,1,p,m\right) \left( \frac{\left\vert f^{\prime }\left(
\left( \sqrt{ab}\right) ^{m}\right) \right\vert ^{q}+m\alpha \left\vert
f^{\prime }\left( a\right) \right\vert ^{q}}{\alpha +1}\right) ^{\frac{1}{q}%
}\right. \\
&&\left. +b^{m}T_{2}^{\frac{1}{p}}\left( \sqrt{ab},\theta ,1,p,m\right)
\left( \frac{\left\vert f^{\prime }\left( \left( \sqrt{ab}\right)
^{m}\right) \right\vert ^{q}+m\alpha \left\vert f^{\prime }\left( b\right)
\right\vert ^{q}}{\alpha +1}\right) ^{\frac{1}{q}}\right\}
\end{eqnarray*}
\end{corollary}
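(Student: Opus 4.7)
The plan is to specialize the general inequality (\ref{2.9}) of Theorem \ref{Theorem7} by taking $x=\sqrt{ab}$ and $\lambda=1$, and then to multiply both sides by the normalizing factor $\frac{2^{\theta-1}}{(m\ln(b/a))^{\theta}}$ to obtain the stated trapezoid-type form. The proof contains no new analytic content; it is entirely a substitution followed by bookkeeping of the powers of $2$, $m$, and $\ln(b/a)$, using the key observation that $\ln(\sqrt{ab}/a)=\ln(b/\sqrt{ab})=\tfrac{1}{2}\ln(b/a)$.

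First, I would compute the left-hand side. Because $\lambda=1$ annihilates the $(1-\lambda)m^{\theta}[\cdots]f(x^{m})$ term in the definition of $K_{f}$ given immediately before Lemma \ref{Lemma2}, and because of the $\tfrac{1}{2}\ln(b/a)$ identity above, the quantity $K_{f}(1,\theta,(\sqrt{ab})^{m},a^{m},b^{m})$ reduces to $\frac{m^{\theta}\ln^{\theta}(b/a)}{2^{\theta}}[f(a^{m})+f(b^{m})]-\Gamma(\theta+1)\bigl[J^{\theta}_{(\sqrt{ab})^{m}-}f(a^{m})+J^{\theta}_{(\sqrt{ab})^{m}+}f(b^{m})\bigr]$. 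Multiplying by $\frac{2^{\theta-1}}{(m\ln(b/a))^{\theta}}$ collapses the first term to $\frac{f(a^{m})+f(b^{m})}{2}$ and yields the claimed equality on the left-hand side.

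Second, I would specialize the right-hand side of (\ref{2.9}). With $x=\sqrt{ab}$, both $(\ln(x/a))^{\theta+1}$ and $(\ln(b/x))^{\theta+1}$ equal $\frac{\ln^{\theta+1}(b/a)}{2^{\theta+1}}$. Combining this with the overall $m^{\theta+1}$ prefactor from (\ref{2.9}) and the normalizing constant $\frac{2^{\theta-1}}{(m\ln(b/a))^{\theta}}$ telescopes the factors of $m$, $2$, and $\ln(b/a)$ to the single coefficient $\frac{m\ln(b/a)}{4}$. The quantities $T_{1}^{1/p}$ and $T_{2}^{1/p}$ are carried through unchanged, now evaluated at the specific arguments $(\sqrt{ab},\theta,1,p,m)$, and the convexity factors $\frac{|f'((\sqrt{ab})^{m})|^{q}+m\alpha|f'(a)|^{q}}{\alpha+1}$ and its $b$-analogue are inherited verbatim from (\ref{2.9}).

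Since the derivation is pure substitution, there is no genuine obstacle; the only potential source of error is the careful bookkeeping of the three distinct powers of two, namely $2^{\theta-1}$ in the normalizing factor, $2^{\theta}$ in the trapezoidal coefficient on the left, and $2^{\theta+1}$ in the $(\tfrac{1}{2}\ln(b/a))^{\theta+1}$ on the right, which must collapse to produce exactly $\tfrac{1}{2}$ in the trapezoid term and $\tfrac{1}{4}$ in the overall right-hand side coefficient.
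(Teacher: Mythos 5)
Your proposal is correct and matches the paper's treatment: the corollary is stated as an immediate specialization of inequality (\ref{2.9}) with $x=\sqrt{ab}$, $\lambda=1$, and your bookkeeping of the factors (the $\lambda=1$ term in $K_{f}$ reducing to the trapezoid average via $\ln\frac{\sqrt{ab}}{a}=\ln\frac{b}{\sqrt{ab}}=\frac{1}{2}\ln\frac{b}{a}$, and the collapse of $2^{\theta-1}\cdot m^{\theta+1}\cdot\frac{\ln^{\theta+1}(b/a)}{2^{\theta+1}}\cdot\frac{1}{(m\ln(b/a))^{\theta}}$ to $\frac{m\ln\frac{b}{a}}{4}$) is exactly right. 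No gap.
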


\begin{corollary}
Let the assumptions of Theorem \ref{Theorem7} hold. If $\ \left\vert
f^{\prime }(u)\right\vert \leq M$ for all $u\in \left[ a^{m},b\right] $ and $%
\lambda =0,$ then from the inequality $\left( \ref{2.9}\right) $ we get the
following Ostrowski type inequality for fractional integrals%
\begin{eqnarray*}
&&\left\vert \left[ \left( \ln \frac{x}{a}\right) ^{\theta }+\left( \ln 
\frac{b}{x}\right) ^{\theta }\right] f(x^{m})-\frac{\Gamma \left( \theta
+1\right) }{m^{\theta }}\left[ J_{x^{m}-}^{\theta
}f(a^{m})+J_{x^{m}+}^{\theta }f(b^{m})\right] \right\vert \\
&\leq &mM\left( \frac{1+m\alpha }{\alpha +1}\right) ^{\frac{1}{q}} \\
&&\times \left\{ a^{m}\left( \ln \frac{x}{a}\right) ^{\theta +1}T_{1}^{\frac{%
1}{p}}\left( x,\theta ,0,p,m\right) +b^{m}\left( \ln \frac{b}{x}\right)
^{\theta +1}T_{2}^{\frac{1}{p}}\left( x,\theta ,0,p,m\right) \right\}
\end{eqnarray*}%
for each $x\in \left[ a,b\right] $
\end{corollary}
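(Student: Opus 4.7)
The plan is to derive this Ostrowski-type bound as a direct specialization of Theorem \ref{Theorem7}, using the uniform bound $|f'|\le M$ to collapse all the $(\alpha,m)$-GA-convexity bookkeeping on the right-hand side of (\ref{2.9}).

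First I would set $\lambda=0$ in the definition of $K_{f}$. The two summands multiplied by $\lambda$ vanish, leaving
\[
K_{f}(0,\theta,x^{m},a^{m},b^{m})=m^{\theta}\bigl[\ln^{\theta}(x/a)+\ln^{\theta}(b/x)\bigr]f(x^{m})-\Gamma(\theta+1)\bigl[J_{x^{m}-}^{\theta}f(a^{m})+J_{x^{m}+}^{\theta}f(b^{m})\bigr].
\]
Dividing this identity by $m^{\theta}$ recovers exactly the expression inside the absolute value on the left-hand side of the claimed Ostrowski inequality. So all the work is in controlling the right-hand side of (\ref{2.9}) at $\lambda=0$.

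Next I would invoke Theorem \ref{Theorem7} with $\lambda=0$ and apply the assumption $|f'(u)|\le M$ for $u\in[a^{m},b]$. Since $x\in[a,b]$ and $m\in(0,1]$, each of $x^{m}$, $a$, $b$ lies in $[a^{m},b]$, so the bound is valid at all three evaluation points appearing in (\ref{2.9}). This gives
\[
\frac{|f'(x^{m})|^{q}+m\alpha|f'(a)|^{q}}{\alpha+1}\le M^{q}\cdot\frac{1+m\alpha}{\alpha+1},
\]
and the analogous estimate with $a$ replaced by $b$. Taking $q$-th roots factors out a common constant $M\bigl((1+m\alpha)/(\alpha+1)\bigr)^{1/q}$ from the curly brace in (\ref{2.9}).

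Finally, dividing the resulting bound through by $m^{\theta}$ converts the prefactor $m^{\theta+1}$ into $m$, producing $mM\bigl((1+m\alpha)/(\alpha+1)\bigr)^{1/q}$ in front of the sum $a^{m}(\ln(x/a))^{\theta+1}T_{1}^{1/p}(x,\theta,0,p,m)+b^{m}(\ln(b/x))^{\theta+1}T_{2}^{1/p}(x,\theta,0,p,m)$, which matches the statement. No genuinely new estimate is required; the only potential obstacle is purely bookkeeping, namely tracking the single factor of $m^{\theta}$ that has to be cancelled between the two sides, and checking that the points at which $f'$ is evaluated on the right of (\ref{2.9}) lie in $[a^{m},b]$ so that the hypothesis $|f'|\le M$ is applicable there.
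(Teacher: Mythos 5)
Your proposal is correct and coincides with the paper's own (implicit) derivation of this corollary: one simply sets $\lambda =0$ in the inequality $\left( \ref{2.9}\right) $ of Theorem \ref{Theorem7}, bounds $\left\vert f^{\prime }\left( x^{m}\right) \right\vert $, $\left\vert f^{\prime }\left( a\right) \right\vert $, $\left\vert f^{\prime }\left( b\right) \right\vert $ by $M$ so that both bracketed factors collapse to $M^{q}\frac{1+m\alpha }{\alpha +1}$, and divides through by $m^{\theta }$ to turn $K_{f}\left( 0,\theta ,x^{m},a^{m},b^{m}\right) $ into the stated left-hand side and $m^{\theta +1}$ into $m$. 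Your care about verifying that $x^{m}$, $a$, $b$ lie in $\left[ a^{m},b\right] $ is a point the paper passes over in silence (it makes the same tacit assumption already in deriving $\left( \ref{2.3}\right) $ and $\left( \ref{2.4}\right) $), so it does not constitute a different route.
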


\begin{theorem}
\bigskip \label{Theorem8}Let $f:I\subseteq \left( 0,\infty \right)
\rightarrow 
\mathbb{R}
$ be a differentiable function on $I^{\circ }$ such that $f^{\prime }\in
L[a^{m},b^{m}]$, where $a^{m},b\in I$ $^{\circ }$ with $a<b$ and $m\in
\left( 0,1\right] $. If $|f^{\prime }|^{q}$ is $\left( \alpha ,m\right) $%
-GA-convex on $[a^{m},b]$ for some fixed $q>1$, $x\in \lbrack a,b]$, $%
\lambda \in \left[ 0,1\right] $ and $\theta >0$ then the following
inequality for fractional integrals holds%
\begin{eqnarray}
&&\left\vert K_{f}\left( \lambda ,\theta ,x^{m},a^{m},b^{m}\right)
\right\vert \leq m^{\theta +1}  \notag \\
&&\times \left\{ a^{m}\left( \ln \frac{x}{a}\right) ^{\theta +1}V_{3}^{\frac{%
1}{p}}\left[ 
\begin{array}{c}
V_{1}\left( \theta ,\lambda ,\alpha ,q\right) \left\vert f^{\prime }\left(
x^{m}\right) \right\vert ^{q} \\ 
+mV_{2}\left( \theta ,\lambda ,\alpha ,q\right) \left\vert f^{\prime }\left(
a\right) \right\vert ^{q}%
\end{array}%
\right] ^{\frac{1}{q}}\right.   \notag \\
&&\left. +b^{m}\left( \ln \frac{b}{x}\right) ^{\theta +1}V_{4}^{\frac{1}{p}}%
\left[ 
\begin{array}{c}
V_{1}\left( \theta ,\lambda ,\alpha ,q\right) \left\vert f^{\prime }\left(
x^{m}\right) \right\vert ^{q} \\ 
+mV_{2}\left( \theta ,\lambda ,\alpha ,q\right) \left\vert f^{\prime }\left(
b\right) \right\vert ^{q}%
\end{array}%
\right] ^{\frac{1}{q}}\right\}   \label{2.12}
\end{eqnarray}%
where%
\begin{equation*}
V_{1}\left( \theta ,\lambda ,\alpha ,q\right)
=\dint\limits_{0}^{1}\left\vert t^{\theta }-\lambda \right\vert
^{q}t^{\alpha }dt
\end{equation*}%
\begin{equation}
=\left\{ 
\begin{array}{ccc}
\frac{1}{\theta q+\alpha +1} & , & \lambda =0 \\ 
\begin{array}{c}
\left\{ \frac{\lambda ^{\left( \theta q+\alpha +1\right) /\theta }}{\theta }%
\beta \left( \frac{\alpha +1}{\theta },q+1\right) +\frac{\left( 1-\lambda
\right) ^{q+1}}{\theta \left( q+1\right) }\right.  \\ 
\left. \times 
\begin{array}{c}
_{2}F_{1}\left( 1-\frac{\alpha +1}{\theta },1;q+2;1-\lambda \right) 
\end{array}%
\right\} 
\end{array}
& , & 0<\lambda <1 \\ 
\frac{1}{\theta }\beta \left( \frac{\alpha +1}{\theta },q+1\right)  & , & 
\lambda =1%
\end{array}%
\right.   \label{2.13}
\end{equation}%
\begin{equation*}
V_{2}\left( \theta ,\lambda ,\alpha ,q\right)
=\dint\limits_{0}^{1}\left\vert t^{\theta }-\lambda \right\vert ^{q}\left(
1-t^{\alpha }\right) dt
\end{equation*}%
\begin{equation}
=\left\{ 
\begin{array}{ccc}
\frac{1}{\theta q+1}-\frac{1}{\theta q+\alpha +1} & , & \lambda =0 \\ 
\begin{array}{c}
\left\{ \frac{\lambda ^{\left( \theta q+1\right) /\theta }}{\theta }\beta
\left( \frac{1}{\theta },q+1\right) -\frac{\lambda ^{\left( \theta q+\alpha
+1\right) /\theta }}{\theta }\beta \left( \frac{\alpha +1}{\theta }%
,q+1\right) \right.  \\ 
\left. +\frac{\left( 1-\lambda \right) ^{q+1}}{\theta \left( q+1\right) }%
\left( 
\begin{array}{c}
\begin{array}{c}
_{2}F_{1}\left( 1-\frac{1}{\theta },1;q+2;1-\lambda \right) 
\end{array}
\\ 
-_{2}F_{1}\left( 1-\frac{\alpha +1}{\theta },1;q+2;1-\lambda \right) 
\end{array}%
\right) \right\} 
\end{array}
& , & 0<\lambda <1 \\ 
\frac{1}{\theta }\beta \left( \frac{1}{\theta },q+1\right) -\frac{1}{\theta }%
\beta \left( \frac{\alpha +1}{\theta },q+1\right)  & , & \lambda =1%
\end{array}%
\right.   \label{2.14}
\end{equation}%
\begin{equation}
V_{3}=\dint\limits_{0}^{1}\left( \frac{x}{a}\right) ^{pmt}dt=\left\{ 
\begin{array}{ccc}
\frac{\left( \frac{x}{a}\right) ^{mp}-1}{\ln \left( \frac{x}{a}\right) ^{mp}}
& , & x\neq a \\ 
1 & , & \text{otherwise}%
\end{array}%
\right.   \label{2.15}
\end{equation}%
\begin{equation}
V_{4}=\dint\limits_{0}^{1}\left( \frac{x}{b}\right) ^{pmt}dt=\left\{ 
\begin{array}{ccc}
\frac{\left( \frac{x}{b}\right) ^{mp}-1}{\ln \left( \frac{x}{b}\right) ^{mp}}
& , & x\neq b \\ 
1 & , & \text{otherwise}%
\end{array}%
\right.   \label{2.16}
\end{equation}

and $\frac{1}{p}+\frac{1}{q}=1$.
\end{theorem}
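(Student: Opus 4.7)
The plan is to mirror the Hölder-based arguments in Theorems \ref{Theorem6} and \ref{Theorem7} but choose a different splitting of the integrand in Lemma \ref{Lemma2}. Starting from
\[
K_{f}(\lambda,\theta,x^{m},a^{m},b^{m}) = m^{\theta+1}a^{m}\bigl(\ln\tfrac{x}{a}\bigr)^{\theta+1}\!\!\int_{0}^{1}(t^{\theta}-\lambda)\bigl(\tfrac{x}{a}\bigr)^{mt}f'\!\bigl(x^{mt}a^{m(1-t)}\bigr)dt - (\text{symmetric term in } b),
\]
I would take absolute values and then apply Hölder's inequality with conjugate exponents $p,q$ so that the factor $\bigl(\tfrac{x}{a}\bigr)^{mt}$ sits alone under the $1/p$ power and the factor $|t^{\theta}-\lambda|\,|f'(\cdot)|$ sits under the $1/q$ power. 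This produces
\[
\int_{0}^{1}\bigl(\tfrac{x}{a}\bigr)^{mt}|t^{\theta}-\lambda|\,|f'(x^{mt}a^{m(1-t)})|\,dt \leq V_{3}^{1/p}\left(\int_{0}^{1}|t^{\theta}-\lambda|^{q}|f'(x^{mt}a^{m(1-t)})|^{q}dt\right)^{1/q},
\]
and an analogous inequality with $a$ replaced by $b$ produces the factor $V_{4}^{1/p}$.

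Next I would invoke the $(\alpha,m)$-GA-convexity hypothesis on $|f'|^{q}$, i.e.\ the bounds $(\ref{2.3})$ and $(\ref{2.4})$, to estimate $|f'(x^{mt}a^{m(1-t)})|^{q}\leq t^{\alpha}|f'(x^{m})|^{q}+m(1-t^{\alpha})|f'(a)|^{q}$ (and similarly at $b$). Plugging this into the previous displayed inequality immediately gives the two inner integrals $V_{1}(\theta,\lambda,\alpha,q)|f'(x^{m})|^{q}+mV_{2}(\theta,\lambda,\alpha,q)|f'(a)|^{q}$ inside the $1/q$-power, which is exactly the structure claimed in $(\ref{2.12})$.

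The remaining work is the explicit evaluation of $V_{1},V_{2},V_{3},V_{4}$. The computations of $V_{3}$ and $V_{4}$ are trivial antiderivatives of an exponential, producing $(\ref{2.15})$ and $(\ref{2.16})$ directly, with the degenerate case $x=a$ (resp.\ $x=b$) handled separately since the logarithm in the denominator vanishes. I expect $V_{1}$ and $V_{2}$ to be the main computational obstacle: for $0<\lambda<1$ one has to split the integrals at $t=\lambda^{1/\theta}$, substitute $u=t^{\theta}$ on each piece, and then recognize the resulting integrals of the form $\int_{0}^{1}u^{(\alpha+1)/\theta - 1}(1-u)^{q}\,du$ and $\int_{0}^{1}(1-\lambda u)^{-(1-(\alpha+1)/\theta)}u^{?}(1-u)^{?}du$ as beta and Gauss hypergeometric functions, exactly as was done for $R_{0}$ in the proof of Theorem \ref{Theorem6}. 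This is the same technique but with the extra weight $t^{\alpha}$ (respectively $1-t^{\alpha}$), which shifts $1/\theta$ to $(\alpha+1)/\theta$ in the beta-function parameter and in the first parameter of ${}_{2}F_{1}$, and for $V_{2}$ requires subtracting the $t^{\alpha}$ contribution from the pure $|t^{\theta}-\lambda|^{q}$ integral. The boundary cases $\lambda=0$ and $\lambda=1$ reduce to elementary beta integrals and yield the first and third branches of $(\ref{2.13})$ and $(\ref{2.14})$. Once the formulas for $V_{1},V_{2},V_{3},V_{4}$ are in hand, combining them with the Hölder-plus-convexity estimate above completes the proof.
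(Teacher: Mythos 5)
Your proposal is correct and follows essentially the same route as the paper's proof: you apply H\"{o}lder so that the weight $\left(\frac{x}{a}\right)^{mt}$ (resp.\ $\left(\frac{x}{b}\right)^{mt}$) alone carries the exponent $p$, producing $V_{3}^{1/p}$ and $V_{4}^{1/p}$, and then use the $(\alpha,m)$-GA-convexity bounds \eqref{2.3}--\eqref{2.4} inside the $q$-integral to obtain $V_{1}$ and $V_{2}$, exactly as in the paper. Your sketched evaluation of $V_{1},V_{2}$ --- splitting at $t=\lambda^{1/\theta}$, substituting $u=t^{\theta}$, and recognizing beta and ${}_{2}F_{1}$ integrals --- correctly fills in what the paper dismisses as ``a simple computation'' in verifying \eqref{2.13} and \eqref{2.14}.
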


\begin{proof}
From Lemma \ref{Lemma2}, property of the modulus, the H\"{o}lder inequality
and by using $\left( \ref{2.3}\right) $, $\left( \ref{2.4}\right) $, $\left( %
\ref{2.15}\right) $ and $\left( \ref{2.16}\right) $ we have%
\begin{equation*}
\left\vert K_{f}\left( \lambda ,\theta ,x^{m},a^{m},b^{m}\right) \right\vert
\leq m^{\theta +1}
\end{equation*}%
\begin{eqnarray*}
&&\times \left\{ a^{m}\left( \ln \frac{x}{a}\right) ^{\theta +1}\left(
\dint\limits_{0}^{1}\left( \frac{x}{a}\right) ^{pmt}dt\right) ^{\frac{1}{p}%
}\left( \dint\limits_{0}^{1}\left\vert t^{\theta }-\lambda \right\vert
^{q}\left\vert f^{\prime }\left( x^{mt}a^{m\left( 1-t\right) }\right)
\right\vert ^{q}dt\right) ^{\frac{1}{q}}\right.  \\
&&\left. +b^{m}\left( \ln \frac{b}{x}\right) ^{\theta +1}\left(
\dint\limits_{0}^{1}\left( \frac{x}{b}\right) ^{pmt}dt\right) ^{\frac{1}{p}%
}\left( \dint\limits_{0}^{1}\left\vert t^{\theta }-\lambda \right\vert
^{q}\left\vert f^{\prime }\left( x^{mt}b^{m\left( 1-t\right) }\right)
\right\vert ^{q}dt\right) ^{\frac{1}{q}}\right\} 
\end{eqnarray*}%
\begin{eqnarray}
&\leq &m^{\theta +1}\left\{ a^{m}\left( \ln \frac{x}{a}\right) ^{\theta
+1}V_{3}^{\frac{1}{p}}\right.   \notag \\
&&\times \left( \dint\limits_{0}^{1}\left\vert t^{\theta }-\lambda
\right\vert ^{q}\left[ 
\begin{array}{c}
t^{\alpha }\left\vert f^{\prime }\left( x^{m}\right) \right\vert ^{q} \\ 
+m\left( 1-t^{\alpha }\right) \left\vert f^{\prime }\left( a\right)
\right\vert ^{q}%
\end{array}%
\right] dt\right) ^{\frac{1}{q}}  \notag \\
&&+b^{m}\left( \ln \frac{b}{x}\right) ^{\theta +1}V_{4}^{\frac{1}{p}}  \notag
\\
&&\left. \times \left( \dint\limits_{0}^{1}\left\vert t^{\theta }-\lambda
\right\vert ^{q}\left[ 
\begin{array}{c}
t^{\alpha }\left\vert f^{\prime }\left( x^{m}\right) \right\vert ^{q} \\ 
+m\left( 1-t^{\alpha }\right) \left\vert f^{\prime }\left( b\right)
\right\vert ^{q}%
\end{array}%
\right] dt\right) ^{\frac{1}{q}}\right\}   \label{2.17}
\end{eqnarray}

By a simple computation we verify $\left( \ref{2.13}\right) $ and $\left( %
\ref{2.14}\right) $. If we use $\left( \ref{2.13}\right) $, $\left( \ref%
{2.14}\right) $, $\left( \ref{2.15}\right) $ and $\left( \ref{2.16}\right) $
in $\left( \ref{2.17}\right) $ we obtain $\left( \ref{2.12}\right) $. This
completes the proof.
\end{proof}

\begin{corollary}
\bigskip Under the assumptions of Theorem \ref{Theorem8} with $x=\sqrt{ab}$, 
$\lambda =\frac{1}{3}$ from the inequality $\left( \ref{2.12}\right) $ we
get the following Simpson type inequality for fractional integrals%
\begin{eqnarray*}
&&\frac{2^{\theta -1}}{\left( m\ln \frac{b}{a}\right) ^{\theta }}\left\vert
K_{f}\left( \frac{1}{3},\theta ,\left( \sqrt{ab}\right)
^{m},a^{m},b^{m}\right) \right\vert =\left\vert \frac{1}{6}\left[
f(a^{m})+4f\left( \left( \sqrt{ab}\right) ^{m}\right) +f(b^{m})\right]
\right. \\
&&-\frac{2^{\theta -1}\Gamma \left( \theta +1\right) }{\left( m\ln \frac{b}{a%
}\right) ^{\theta }}\left. \left[ J_{\left( \sqrt{ab}\right) ^{m}-}^{\theta
}f(a^{m})+J_{\left( \sqrt{ab}\right) ^{m}+}^{\theta }f(b^{m})\right]
\right\vert \leq \frac{m\ln \frac{b}{a}}{4} \\
&&\times \left\{ a^{m}\left( \frac{\left( \frac{b}{a}\right) ^{\frac{mp}{2}%
}-1}{\ln \left( \frac{b}{a}\right) ^{\frac{mp}{2}}}\right) ^{\frac{1}{p}}%
\left[ 
\begin{array}{c}
V_{1}\left( \theta ,\frac{1}{3},\alpha ,q\right) \left\vert f^{\prime
}\left( x^{m}\right) \right\vert ^{q} \\ 
+mV_{2}\left( \theta ,\frac{1}{3},\alpha ,q\right) \left\vert f^{\prime
}\left( a\right) \right\vert ^{q}%
\end{array}%
\right] ^{\frac{1}{q}}\right. \\
&&\left. +b^{m}\left( \frac{\left( \frac{a}{b}\right) ^{\frac{mp}{2}}-1}{\ln
\left( \frac{a}{b}\right) ^{\frac{mp}{2}}}\right) ^{\frac{1}{p}}\left[ 
\begin{array}{c}
V_{1}\left( \theta ,\frac{1}{3},\alpha ,q\right) \left\vert f^{\prime
}\left( x^{m}\right) \right\vert ^{q} \\ 
+mV_{2}\left( \theta ,\frac{1}{3},\alpha ,q\right) \left\vert f^{\prime
}\left( b\right) \right\vert ^{q}%
\end{array}%
\right] ^{\frac{1}{q}}\right\}
\end{eqnarray*}
\end{corollary}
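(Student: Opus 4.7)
The plan is to obtain the stated Simpson-type inequality as a direct specialization of Theorem \ref{Theorem8}: substitute $x=\sqrt{ab}$ and $\lambda=\tfrac{1}{3}$ into \eqref{2.12}, then multiply both sides by $2^{\theta-1}/(m\ln\tfrac{b}{a})^{\theta}$ to re-express the left-hand side in the stated normalised form.

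First I would simplify the geometric/logarithmic quantities that arise when $x=\sqrt{ab}$. One has $\ln\tfrac{x}{a}=\ln\tfrac{b}{x}=\tfrac{1}{2}\ln\tfrac{b}{a}$, hence $\ln^{\theta}\tfrac{x}{a}+\ln^{\theta}\tfrac{b}{x}=2^{1-\theta}\ln^{\theta}\tfrac{b}{a}$ and $\ln^{\theta+1}\tfrac{x}{a}=\ln^{\theta+1}\tfrac{b}{x}=2^{-(\theta+1)}\ln^{\theta+1}\tfrac{b}{a}$. Substituting these together with $\lambda=\tfrac{1}{3}$ into the definition of $K_{f}(\lambda,\theta,x^{m},a^{m},b^{m})$, the coefficients $(1-\lambda)=\tfrac{2}{3}$ and $\lambda=\tfrac{1}{3}$ combine with the factor $2^{\theta-1}/(m\ln\tfrac{b}{a})^{\theta}$ exactly to produce the Simpson weights $\tfrac{1}{6},\tfrac{4}{6},\tfrac{1}{6}$, thereby identifying the left-hand side with the expression
\[
\left|\tfrac{1}{6}\left[f(a^{m})+4f((\sqrt{ab})^{m})+f(b^{m})\right]-\tfrac{2^{\theta-1}\Gamma(\theta+1)}{(m\ln\tfrac{b}{a})^{\theta}}\left[J_{(\sqrt{ab})^{m}-}^{\theta}f(a^{m})+J_{(\sqrt{ab})^{m}+}^{\theta}f(b^{m})\right]\right|.
\]

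On the right-hand side I would specialise $V_{3}$ and $V_{4}$ of Theorem \ref{Theorem8} at $x=\sqrt{ab}$. Using \eqref{2.15} and \eqref{2.16}, the ratios $\tfrac{x}{a}$ and $\tfrac{x}{b}$ become $(b/a)^{1/2}$ and $(a/b)^{1/2}$ respectively, so
\[
V_{3}=\frac{(b/a)^{mp/2}-1}{\ln(b/a)^{mp/2}},\qquad V_{4}=\frac{(a/b)^{mp/2}-1}{\ln(a/b)^{mp/2}},
\]
which matches the expressions written in the corollary. The factor $m^{\theta+1}a^{m}\ln^{\theta+1}\tfrac{x}{a}$ (respectively with $b$) contributes $m^{\theta+1}a^{m}\cdot 2^{-(\theta+1)}\ln^{\theta+1}\tfrac{b}{a}$, which after multiplication by $2^{\theta-1}/(m\ln\tfrac{b}{a})^{\theta}$ collapses to $\tfrac{m\ln(b/a)}{4}a^{m}$, producing the external prefactor in the statement. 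Finally, $V_{1}(\theta,\tfrac{1}{3},\alpha,q)$ and $V_{2}(\theta,\tfrac{1}{3},\alpha,q)$ remain in the exact form defined in \eqref{2.13}--\eqref{2.14} at $\lambda=\tfrac{1}{3}$, so no further evaluation is needed.

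There is no genuine obstacle here: the corollary is a specialisation, and the only delicate step is tracking the compensation between the factor $2^{\theta-1}/(m\ln\tfrac{b}{a})^{\theta}$ and the $\theta$-dependent powers arising from $\ln\tfrac{x}{a}=\ln\tfrac{b}{x}=\tfrac{1}{2}\ln\tfrac{b}{a}$, both on the left (to produce Simpson's weights) and on the right (to produce the clean $\tfrac{m\ln(b/a)}{4}$ prefactor). Once these bookkeeping identities are checked, the inequality in the corollary follows immediately from \eqref{2.12}.
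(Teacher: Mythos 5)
Your proposal is correct and is exactly the argument the paper intends: the corollary is a direct specialization of Theorem \ref{Theorem8}, obtained by setting $x=\sqrt{ab}$, $\lambda=\tfrac{1}{3}$ in \eqref{2.12}, using $\ln\tfrac{x}{a}=\ln\tfrac{b}{x}=\tfrac{1}{2}\ln\tfrac{b}{a}$, and multiplying through by $2^{\theta-1}/(m\ln\tfrac{b}{a})^{\theta}$, and your bookkeeping (Simpson weights $\tfrac16,\tfrac46,\tfrac16$ on the left, the prefactor $\tfrac{m\ln(b/a)}{4}$ and the specialized $V_{3},V_{4}$ on the right) checks out. The only remark worth adding is that the $\left\vert f^{\prime }\left( x^{m}\right) \right\vert ^{q}$ appearing in the corollary's displayed bound should read $\left\vert f^{\prime }\left( \left( \sqrt{ab}\right) ^{m}\right) \right\vert ^{q}$, a typo in the paper's statement that your substitution implicitly corrects.
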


\begin{corollary}
Under the assumptions of Theorem \ref{Theorem8} with $x=\sqrt{ab}$,$\
\lambda =0$ from the inequality $\left( \ref{2.12}\right) $ we get the
following midpoint-type inequality for fractional integrals
\end{corollary}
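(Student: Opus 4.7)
The plan is to specialize Theorem \ref{Theorem8} by setting $x = \sqrt{ab}$ and $\lambda = 0$ in inequality $\left(\ref{2.12}\right)$; the corollary is then a direct chain of substitutions and elementary integral evaluations, with no new estimates required.

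First I would simplify the left-hand side. At $x = \sqrt{ab}$ one has $\ln \frac{x}{a} = \ln \frac{b}{x} = \tfrac{1}{2}\ln\frac{b}{a}$, and the choice $\lambda = 0$ erases the $f(a^m)$ and $f(b^m)$ boundary contributions in the definition of $K_f$, leaving only the $f((\sqrt{ab})^m)$ term (weighted by $2 m^\theta\bigl(\tfrac{1}{2}\ln\frac{b}{a}\bigr)^\theta$) together with the two Hadamard fractional integrals. Multiplying through by the normalizing factor $\frac{2^{\theta-1}}{(m \ln \frac{b}{a})^\theta}$ then produces exactly the midpoint expression $f\!\left((\sqrt{ab})^m\right) - \frac{2^{\theta-1}\Gamma(\theta+1)}{(m\ln\frac{b}{a})^\theta}\bigl[J^\theta_{(\sqrt{ab})^m-}f(a^m) + J^\theta_{(\sqrt{ab})^m+}f(b^m)\bigr]$ expected on the left-hand side, mirroring the analogous Corollaries \ref{Corollary3} and \ref{Corollary7}.

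Next I would evaluate the auxiliary constants at $\lambda = 0$ and $x = \sqrt{ab}$. Using the $\lambda = 0$ branch of $\left(\ref{2.13}\right)$ and $\left(\ref{2.14}\right)$, or equivalently just reading off the defining integrals, gives $V_1(\theta,0,\alpha,q) = \frac{1}{\theta q + \alpha + 1}$ and $V_2(\theta,0,\alpha,q) = \frac{1}{\theta q + 1} - \frac{1}{\theta q + \alpha + 1}$. For $V_3$ and $V_4$ I would substitute $\frac{\sqrt{ab}}{a} = (b/a)^{1/2}$ and $\frac{\sqrt{ab}}{b} = (a/b)^{1/2}$ into $\left(\ref{2.15}\right)$ and $\left(\ref{2.16}\right)$, obtaining $V_3 = \frac{(b/a)^{mp/2} - 1}{\ln(b/a)^{mp/2}}$ and $V_4 = \frac{(a/b)^{mp/2} - 1}{\ln(a/b)^{mp/2}}$.

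Finally I would assemble the right-hand side. The outer factor $m^{\theta+1}$ from $\left(\ref{2.12}\right)$, combined with the two logarithmic factors $\bigl(\tfrac{1}{2}\ln\frac{b}{a}\bigr)^{\theta+1}$ and the normalizing prefactor $\frac{2^{\theta-1}}{(m \ln \frac{b}{a})^\theta}$, collapses to the common coefficient $\frac{m \ln \frac{b}{a}}{4}$ in front of both the $a^m$ and $b^m$ brackets, via the identity $m^{\theta+1}\cdot 2^{-(\theta+1)}\cdot 2^{\theta-1}\cdot m^{-\theta} = m/4$. The only step demanding even mild care is tracking this cancellation of powers of $2$ and of $\ln\frac{b}{a}$ against the normalizing prefactor; beyond that the derivation is purely substitutional, so no serious obstacle is anticipated.
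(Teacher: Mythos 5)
Your proposal is correct and takes exactly the route the paper intends: the corollary is a direct specialization of $\left(\ref{2.12}\right)$ at $x=\sqrt{ab}$, $\lambda=0$, using $\ln\frac{x}{a}=\ln\frac{b}{x}=\frac{1}{2}\ln\frac{b}{a}$, the $\lambda=0$ branches $V_{1}\left(\theta,0,\alpha,q\right)=\frac{1}{\theta q+\alpha+1}$ and $V_{2}\left(\theta,0,\alpha,q\right)=\frac{1}{\theta q+1}-\frac{1}{\theta q+\alpha+1}$ of $\left(\ref{2.13}\right)$ and $\left(\ref{2.14}\right)$, the evaluations of $V_{3}$, $V_{4}$ from $\left(\ref{2.15}\right)$ and $\left(\ref{2.16}\right)$, and the prefactor cancellation $m^{\theta+1}\cdot 2^{-\left(\theta+1\right)}\cdot 2^{\theta-1}\cdot m^{-\theta}=\frac{m}{4}$, all of which match the paper's displayed inequality (whose $\left\vert f^{\prime}\left(x^{m}\right)\right\vert^{q}$ should of course read $\left\vert f^{\prime}\left(\left(\sqrt{ab}\right)^{m}\right)\right\vert^{q}$, a typo your substitution implicitly corrects).
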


\begin{eqnarray*}
&&\frac{2^{\theta -1}}{\left( m\ln \frac{b}{a}\right) ^{\theta }}\left\vert
K_{f}\left( 0,\theta ,\left( \sqrt{ab}\right) ^{m},a^{m},b^{m}\right)
\right\vert \\
&=&\left\vert f\left( \left( \sqrt{ab}\right) ^{m}\right) -\frac{2^{\theta
-1}\Gamma \left( \theta +1\right) }{\left( m\ln \frac{b}{a}\right) ^{\theta }%
}\left[ J_{\left( \sqrt{ab}\right) ^{m}-}^{\theta }f(a^{m})+J_{\left( \sqrt{%
ab}\right) ^{m}+}^{\theta }f(b^{m})\right] \right\vert \\
&\leq &\frac{m\ln \frac{b}{a}}{4}\left\{ a^{m}\left( \frac{\left( \frac{b}{a}%
\right) ^{\frac{mp}{2}}-1}{\ln \left( \frac{b}{a}\right) ^{\frac{mp}{2}}}%
\right) ^{\frac{1}{p}}\left[ 
\begin{array}{c}
\frac{1}{\theta q+\alpha +1}\left\vert f^{\prime }\left( x^{m}\right)
\right\vert ^{q} \\ 
+\left( \frac{m}{\theta q+1}-\frac{m}{\theta q+\alpha +1}\right) \left\vert
f^{\prime }\left( a\right) \right\vert ^{q}%
\end{array}%
\right] ^{\frac{1}{q}}\right. \\
&&\left. +b^{m}\left( \frac{\left( \frac{a}{b}\right) ^{\frac{mp}{2}}-1}{\ln
\left( \frac{a}{b}\right) ^{\frac{mp}{2}}}\right) ^{\frac{1}{p}}\left[ 
\begin{array}{c}
\frac{1}{\theta q+\alpha +1}\left\vert f^{\prime }\left( x^{m}\right)
\right\vert ^{q} \\ 
+\left( \frac{m}{\theta q+1}-\frac{m}{\theta q+\alpha +1}\right) \left\vert
f^{\prime }\left( b\right) \right\vert ^{q}%
\end{array}%
\right] ^{\frac{1}{q}}\right\}
\end{eqnarray*}

\begin{corollary}
Under the assumptions of Theorem \ref{Theorem8} with$\ x=\sqrt{ab}$, $%
\lambda =1$ from the inequality $\left( \ref{2.12}\right) $ we get the
following trepezoid-type inequality for fractional integrals 
\begin{eqnarray*}
&&\frac{2^{\theta -1}}{\left( m\ln \frac{b}{a}\right) ^{\theta }}\left\vert
K_{f}\left( 1,\theta ,\left( \sqrt{ab}\right) ^{m},a^{m},b^{m}\right)
\right\vert \\
&=&\left\vert \frac{f(a^{m})+f(b^{m})}{2}-\frac{2^{\theta -1}\Gamma \left(
\theta +1\right) }{\left( m\ln \frac{b}{a}\right) ^{\theta }}\left[
J_{\left( \sqrt{ab}\right) ^{m}-}^{\theta }f(a^{m})+J_{\left( \sqrt{ab}%
\right) ^{m}+}^{\theta }f(b^{m})\right] \right\vert \\
&\leq &\frac{m\ln \frac{b}{a}}{4}\left\{ a^{m}\left( \frac{\left( \frac{b}{a}%
\right) ^{\frac{mp}{2}}-1}{\ln \left( \frac{b}{a}\right) ^{\frac{mp}{2}}}%
\right) ^{\frac{1}{p}}\left[ 
\begin{array}{c}
\frac{1}{\theta }\beta \left( \frac{\alpha +1}{\theta },q+1\right)
\left\vert f^{\prime }\left( x^{m}\right) \right\vert ^{q} \\ 
+\left( \frac{1}{\theta }\beta \left( \frac{1}{\theta },q+1\right) -\frac{1}{%
\theta }\beta \left( \frac{\alpha +1}{\theta },q+1\right) \right) \left\vert
f^{\prime }\left( a\right) \right\vert ^{q}%
\end{array}%
\right] ^{\frac{1}{q}}\right. \\
&&\left. +b^{m}\left( \frac{\left( \frac{a}{b}\right) ^{\frac{mp}{2}}-1}{\ln
\left( \frac{a}{b}\right) ^{\frac{mp}{2}}}\right) ^{\frac{1}{p}}\left[ 
\begin{array}{c}
\frac{1}{\theta }\beta \left( \frac{\alpha +1}{\theta },q+1\right)
\left\vert f^{\prime }\left( x^{m}\right) \right\vert ^{q} \\ 
+\left( \frac{1}{\theta }\beta \left( \frac{1}{\theta },q+1\right) -\frac{1}{%
\theta }\beta \left( \frac{\alpha +1}{\theta },q+1\right) \right) \left\vert
f^{\prime }\left( b\right) \right\vert ^{q}%
\end{array}%
\right] ^{\frac{1}{q}}\right\}
\end{eqnarray*}
\end{corollary}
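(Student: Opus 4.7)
The plan is to derive the stated trapezoid-type inequality as a direct specialization of inequality (\ref{2.12}) from Theorem \ref{Theorem8} by setting $x = \sqrt{ab}$ and $\lambda = 1$ and performing the resulting algebraic simplifications. No new analytic ingredients are required; everything reduces to plugging into a formula already established.

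First, I would record the effect of $x = \sqrt{ab}$ on the two logarithmic weights: both $\ln(x/a)$ and $\ln(b/x)$ equal $\tfrac{1}{2}\ln(b/a)$, so that $(\ln(x/a))^{\theta+1} = (\ln(b/x))^{\theta+1} = \tfrac{1}{2^{\theta+1}}(\ln(b/a))^{\theta+1}$, and similarly $\ln^\theta(x/a) = \ln^\theta(b/x) = \tfrac{1}{2^\theta}(\ln(b/a))^\theta$. Together with $\lambda = 1$, the $(1-\lambda)$ summand in the definition of $K_f(\lambda,\theta,x^m,a^m,b^m)$ vanishes, leaving
\[
K_f(1,\theta,(\sqrt{ab})^m,a^m,b^m) = \tfrac{(m\ln(b/a))^\theta}{2^\theta}\bigl[f(a^m)+f(b^m)\bigr] - \Gamma(\theta+1)\bigl[J_{(\sqrt{ab})^m-}^\theta f(a^m)+J_{(\sqrt{ab})^m+}^\theta f(b^m)\bigr].
\]
Multiplying by $\tfrac{2^{\theta-1}}{(m\ln(b/a))^\theta}$ then yields the left-hand side stated in the corollary, namely the absolute difference between $\tfrac{f(a^m)+f(b^m)}{2}$ and the normalized fractional-integral average.

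Second, I would evaluate the coefficients $V_1,V_2,V_3,V_4$ at the chosen parameters. Since $\lambda=1$, formulas (\ref{2.13}) and (\ref{2.14}) immediately give
\[
V_1(\theta,1,\alpha,q) = \tfrac{1}{\theta}\beta\bigl(\tfrac{\alpha+1}{\theta},q+1\bigr), \qquad V_2(\theta,1,\alpha,q) = \tfrac{1}{\theta}\beta\bigl(\tfrac{1}{\theta},q+1\bigr) - \tfrac{1}{\theta}\beta\bigl(\tfrac{\alpha+1}{\theta},q+1\bigr),
\]
which are precisely the beta-function expressions appearing inside the bracketed factors of the claim. For $V_3$ and $V_4$ I substitute $x/a=\sqrt{b/a}$ and $x/b=\sqrt{a/b}$ into (\ref{2.15}) and (\ref{2.16}), obtaining $V_3=\tfrac{(b/a)^{mp/2}-1}{\ln(b/a)^{mp/2}}$ and $V_4=\tfrac{(a/b)^{mp/2}-1}{\ln(a/b)^{mp/2}}$, matching the coefficients of the $a^m$ and $b^m$ summands.

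Finally, I would combine the pieces. The right-hand side of (\ref{2.12}) carries the common factor $m^{\theta+1}$ multiplied (inside each summand) by $(\ln(x/a))^{\theta+1}$ or $(\ln(b/x))^{\theta+1}$, i.e.\ by $\tfrac{1}{2^{\theta+1}}(\ln(b/a))^{\theta+1}$ after substitution. Rescaling by the same factor $\tfrac{2^{\theta-1}}{(m\ln(b/a))^\theta}$ that normalized the left-hand side collapses the constants to $\tfrac{m\ln(b/a)}{4}$, producing exactly the prefactor in the claim. There is no conceptual obstacle; the only real hurdle is bookkeeping — aligning the two beta-function terms in $V_2$ as the coefficient of $|f'(a)|^q$ (respectively $|f'(b)|^q$) and carefully tracking the powers of $2$ and $m$ through the rescaling — so that the specialized inequality emerges in the stated form directly from Theorem \ref{Theorem8}.
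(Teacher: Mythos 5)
Your proposal is correct and follows exactly the route the paper intends: the corollary is an immediate specialization of Theorem \ref{Theorem8}, and your substitutions $x=\sqrt{ab}$, $\lambda=1$ into $K_f$, into $V_1,V_2$ via (\ref{2.13})--(\ref{2.14}), into $V_3,V_4$ via (\ref{2.15})--(\ref{2.16}), and your rescaling $\frac{2^{\theta-1}}{(m\ln\frac{b}{a})^{\theta}}\cdot m^{\theta+1}\cdot\frac{(\ln\frac{b}{a})^{\theta+1}}{2^{\theta+1}}=\frac{m\ln\frac{b}{a}}{4}$ all check out. As a side note, your computation also shows that the factors $\left\vert f^{\prime}\left( x^{m}\right)\right\vert ^{q}$ left in the paper's statement should read $\left\vert f^{\prime}\left( \left( \sqrt{ab}\right) ^{m}\right)\right\vert ^{q}$, an unsubstituted remnant rather than a substantive difference.
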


\begin{corollary}
Let the assumptions of Theorem \ref{Theorem7} hold. If $\ \left\vert
f^{\prime }(u)\right\vert \leq M$ for all $u\in \left[ a^{m},b\right] $ and $%
\lambda =0,$ then from the inequality $\left( \ref{2.12}\right) $ we get the
following Ostrowski type inequality for fractional integrals%
\begin{eqnarray*}
&&\left\vert \left[ \left( \ln \frac{x}{a}\right) ^{\theta }+\left( \ln 
\frac{b}{x}\right) ^{\theta }\right] f(x^{m})-\frac{\Gamma \left( \theta
+1\right) }{m^{\theta }}\left[ J_{x^{m}-}^{\theta
}f(a^{m})+J_{x^{m}+}^{\theta }f(b^{m})\right] \right\vert \\
&\leq &mM\left[ 
\begin{array}{c}
\frac{1}{\theta q+\alpha +1} \\ 
+\left( \frac{m}{\theta q+1}-\frac{m}{\theta q+\alpha +1}\right)%
\end{array}%
\right] ^{\frac{1}{q}}\left\{ a^{m}\left( \ln \frac{x}{a}\right) ^{\theta
+1}\left( \frac{\left( \frac{x}{a}\right) ^{mp}-1}{\ln \left( \frac{x}{a}%
\right) ^{mp}}\right) ^{\frac{1}{p}}\right. \\
&&\left. +b^{m}\left( \ln \frac{b}{x}\right) ^{\theta +1}\left( \frac{\left( 
\frac{x}{b}\right) ^{mp}-1}{\ln \left( \frac{x}{b}\right) ^{mp}}\right) ^{%
\frac{1}{p}}\right\}
\end{eqnarray*}%
for each $x\in \left[ a,b\right] $
\end{corollary}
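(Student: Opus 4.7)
The plan is to specialize Theorem \ref{Theorem8} to $\lambda=0$ and then invoke the uniform bound $|f'|\le M$. First, I would inspect the left-hand side $K_{f}(0,\theta,x^{m},a^{m},b^{m})$: setting $\lambda=0$ in the definition of $K_{f}$ kills the two boundary-value terms weighted by $\lambda$, and the coefficient of $f(x^{m})$ becomes $m^{\theta}\bigl[\ln^{\theta}(x/a)+\ln^{\theta}(b/x)\bigr]$. Dividing the entire inequality \eqref{2.12} by $m^{\theta}$ converts this into exactly the Ostrowski-type expression $\bigl[(\ln(x/a))^{\theta}+(\ln(b/x))^{\theta}\bigr]f(x^{m})-\frac{\Gamma(\theta+1)}{m^{\theta}}\bigl[J_{x^{m}-}^{\theta}f(a^{m})+J_{x^{m}+}^{\theta}f(b^{m})\bigr]$ that appears on the left of the claim, and turns the prefactor $m^{\theta+1}$ on the right into $m$.

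Second, I would read off the $\lambda=0$ rows of the piecewise formulas \eqref{2.13} and \eqref{2.14}, obtaining
\[
V_{1}(\theta,0,\alpha,q)=\frac{1}{\theta q+\alpha+1},\qquad V_{2}(\theta,0,\alpha,q)=\frac{1}{\theta q+1}-\frac{1}{\theta q+\alpha+1}.
\]
These are the constants that appear inside the $q$-th root in the target inequality. The quantities $V_{3}$ and $V_{4}$ at the given $x$ are copied verbatim from \eqref{2.15} and \eqref{2.16}, and they are left unevaluated in the statement.

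Third, I would apply the hypothesis $|f'(u)|\le M$ pointwise. Since $x^{m},a,b$ all lie in $[a^{m},b]$, each of $|f'(x^{m})|^{q}$, $|f'(a)|^{q}$, $|f'(b)|^{q}$ is $\le M^{q}$, and the two bracketed expressions in \eqref{2.12} collapse to the same value
\[
M^{q}\!\left[V_{1}(\theta,0,\alpha,q)+mV_{2}(\theta,0,\alpha,q)\right]=M^{q}\!\left[\tfrac{1}{\theta q+\alpha+1}+\Bigl(\tfrac{m}{\theta q+1}-\tfrac{m}{\theta q+\alpha+1}\Bigr)\right].
\]
Taking $q$-th roots pulls out one factor of $M$, and combining it with the $m$ produced in the first step gives the overall prefactor $mM$. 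The common $[\,\cdot\,]^{1/q}$ factor then lives outside the curly braces, and the two summands inside reduce to $a^{m}(\ln(x/a))^{\theta+1}V_{3}^{1/p}$ and $b^{m}(\ln(b/x))^{\theta+1}V_{4}^{1/p}$, which is exactly the right-hand side displayed in the corollary.

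The proof is really just a mechanical specialization; the only point that needs a little care is matching the $m^{\theta}$ bookkeeping so that the factor $m^{\theta+1}$ in front of \eqref{2.12} collapses cleanly to $mM$ after dividing by $m^{\theta}$ and factoring $M$ out of the $q$-th root. There is no genuine obstacle beyond this algebraic matching, since both $V_{1}(\theta,0,\alpha,q)$ and $V_{2}(\theta,0,\alpha,q)$ are given in closed form and no new estimate is required.
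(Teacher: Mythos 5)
Your proposal is correct and is exactly the derivation the paper intends: the corollary is stated without a separate proof, being the immediate specialization of inequality \eqref{2.12} of Theorem \ref{Theorem8} at $\lambda=0$ (note the paper's reference to Theorem \ref{Theorem7} there is a typo), with the $\lambda=0$ rows of \eqref{2.13}--\eqref{2.14} giving $V_{1}=\frac{1}{\theta q+\alpha+1}$ and $V_{2}=\frac{1}{\theta q+1}-\frac{1}{\theta q+\alpha+1}$, the uniform bound $M$ factored out of the $q$-th roots, and division by $m^{\theta}$ handling the bookkeeping. Your accounting of the prefactor $mM$ and of $V_{3}^{1/p}$, $V_{4}^{1/p}$ matches the stated right-hand side, so nothing is missing.
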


\end{document}